\title{Uniform Sobolev Inequality Along the Sasaki-Ricci Flow}
\author{Tristan C. Collins}
\begin{document}
\theoremstyle{plain}
\newtheorem{Lemma}{Lemma}[section]
\newtheorem{prop}{Proposition}[section]
\newtheorem{Theorem}{Theorem}[section]
\newtheorem{corollary}{Corollary}[section]
\newtheorem{definition}{Definition}[section]
\newtheorem{example}{Example}
\theoremstyle{remark}
\newtheorem*{remark}{Remark}
\newtheorem{thing}{Thing}

\maketitle
\begin{abstract}
In this paper we prove a uniform Sobolev inequality along the Sasaki-Ricci flow.  In the process, we develop the theory of basic Lebesgue and Sobolev function spaces, and prove some general results about the decomposition of the heat kernel for a class of elliptic operators on a Sasaki manifold.
\end{abstract}

\section{Introduction and Statement of Results}
There has recently been a great deal of interest in Sasaki geometry, particularly with regards to producing examples of Sasaki-Einstein metrics.  This interest has been fueled primarily by advances in the AdS/CFT correspondence in theoretical physics.  When the basic first Chern class is non-positive, the existence theory is well developed and generalizes the results in the K\"ahler setting.  However, when the basic first Chern class is positive there are known obstructions to existence; see for example the work of Futaki, Ono, and Wang \cite{Futaki}, and Gauntlett, Martelli, Sparks and Yau \cite{GMarSparYau}.  It is reasonable to expect that a suitably generalized version of the now famous conjecture of Yau \cite{Yau} should hold in the Sasaki setting;  that is, existence of Sasaki-Einstein metrics should be equivalent to some geometric invariant theory notion of stability.  We refer the reader to the survey article \cite{PSstab} and the references therein for an introduction to this very active area of research in the K\"ahler setting.  A great deal of ingenuity on the part of many different authors has produced a large number of quasi-regular Sasaki-Einstein manifolds; we refer the reader to the survey article \cite{Sparks} and the references therein.  Until recently there were no examples of Sasaki-Einstein metrics on irregular Sasaki manifolds;  in fact, such manifolds were conjectured to not exist by Cheeger and Tian \cite{CheeTian}.  The first examples of irregular Sasaki-Einstein manifolds were produced by Gauntlett, Martelli, Sparks and Waldrum in \cite{GMarSparW}, \cite{GMarSparW1}, and further developed by Martelli and Sparks \cite{MarSpar}, \cite{MarSpar1}.  New toric irregular Sasaki-Einstein metrics have been produced, for example on the circle bundle of a power of the anti-canonical bundle over the two point blow up of $\mathbb{C}P^{2}$, in the paper \cite{Futaki}.  These discoveries necessitate a more robust approach to the  existence theory for Sasaki-Einstein structures.  In response, a number of approaches have been developed by several different authors.  For example, Martelli, Sparks and Yau \cite{MarSparYau}, and Boyer, Galicki and Simanca \cite{BoyGalSim} developed an approach in the spirit of Calabi, by examining functionals related to the volume and scalar curvature.  A flow approach was developed by Smoczyk, Wang and Zhang in \cite{SmoWaZa}, where they introduced the Sasaki-Ricci flow, and generalized the results of Cao \cite{Cao} to Sasaki manifolds.  In a recent paper \cite{TristC}, the author generalized Perelman's functionals to the Sasaki setting and proved uniform bounds for the transverse scalar curvature and the transverse diameter.  In this paper we extend the analogy between the K\"ahler-Ricci flow and the Sasaki-Ricci flow by showing that these results  imply a uniform, basic Sobolev inequality along the Sasaki-Ricci flow.  Our main theorem is

\begin{Theorem}\label{main theorem}
Let $(S,g_{0})$ be a Sasaki manifold of dimension $m= 2n+1$, with basic first Chern class $c^{1}_{B}>0$.  Let $g(t)$ be a solution of the normalized Sasaki-Ricci flow on $[0,\infty)$ with $g(0)= g_{0}$, and let $d\mu_{t}$ be the volume form with respect to $g(t)$.  Then, for every $v \in W^{1,2}_{B}(S)$, we have
\begin{equation*}
\left(\int_{S} v^{2m/(m-2)}d\mu_{t}\right)^{(m-2)/m} \leq C \int_{S}\left( |\nabla v|^{2} +  v^{2}\right)d\mu_{t}
\end{equation*}
where $C$ depends only on $g_{0}$ and $m$.
\end{Theorem}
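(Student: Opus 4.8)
The plan is to adapt to the Sasaki setting the argument used by Zhang and by Ye for the Ricci flow: first extract from Perelman-type entropy monotonicity a \emph{uniform basic logarithmic Sobolev inequality} along the flow, and then upgrade it to the stated Sobolev inequality via the Davies--Gross heat-kernel machinery, now applied to the transverse Laplacian acting on basic functions. The basic Lebesgue and Sobolev space theory and the heat-kernel decomposition developed in this paper are precisely what make the second step legitimate in the foliated category, while \cite{TristC} supplies the input for the first.

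For Step 1, I would use that the transverse analogue of Perelman's $\mathcal{W}$-entropy is monotone along the normalized Sasaki-Ricci flow, and that the transverse scalar curvature $R^T$ is uniformly bounded along the flow, both proved in \cite{TristC}. One first records that the fixed smooth metric $g_0$ satisfies a basic logarithmic Sobolev inequality --- equivalently $\mu(g_0,\tau) \geq -A_0$ for all $\tau \in (0,\tau_0]$, with $A_0$ depending only on $g_0$ --- which follows from the compactness of $S$ together with the basic elliptic theory (for instance from an on-diagonal bound for the basic heat kernel of $g_0$). Feeding this into the monotonicity of $\tau \mapsto \mu(g(t),\tau)$ yields, for all $t \geq 0$, all $\tau \in (0,\tau_0]$, and every $v \in W^{1,2}_B(S)$ with $\int_S v^2\, d\mu_t = 1$,
\begin{equation*}
\int_S v^2 \log v^2 \, d\mu_t \;\leq\; \tau \int_S \left( 4|\nabla v|^2 + R^T v^2 \right) d\mu_t \;-\; \frac{m}{2}\log\tau \;+\; A ,
\end{equation*}
with $A = A(g_0,m)$. (Perelman's transverse functional naturally yields the coefficient $n$ of the transverse complex dimension rather than $\tfrac{m}{2}$; since $n < \tfrac{m}{2}$ the displayed form is the weaker one and is all we use, and stating it with $m$ keeps the conclusion uniform in the dimension, notably when $m=3$.) Since $\sup_t \|R^T\|_{L^\infty(g(t))} < \infty$ and $|\nabla v| = |\nabla^T v|$ for basic $v$, the curvature term can be absorbed, giving constants $a,b,\tau_0 > 0$ depending only on $g_0$ and $m$ with
\begin{equation*}
\int_S v^2 \log v^2 \, d\mu_t \;\leq\; a\,\tau \int_S |\nabla v|^2\, d\mu_t \;-\; \frac{m}{2}\log\tau \;+\; b
\end{equation*}
for all $t\geq 0$, all $\tau\in(0,\tau_0]$, and all $v\in W^{1,2}_B(S)$ with $\|v\|_{L^2(d\mu_t)}=1$.

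For Step 2, fix $t$ and let $\Delta_B$ denote the transverse Laplacian of $g(t)$ acting on basic functions. By the heat-kernel decomposition of this paper, $\Delta_B$ is self-adjoint on $L^2_B(S,d\mu_t)$, the semigroup $e^{-s\Delta_B}$ is positivity-preserving and an $L^1_B$-contraction, and it has a smooth basic kernel; Davies' criterion then shows that the inequality from Step 1 is equivalent to the ultracontractivity estimate
\begin{equation*}
\left\| e^{-s\Delta_B} \right\|_{L^1_B \to L^\infty_B} \;\leq\; C\, s^{-m/2}, \qquad s \in (0,\tau_0],
\end{equation*}
with $C = C(g_0,m)$ uniformly in $t$. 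The standard passage from such on-diagonal heat-kernel bounds to Sobolev inequalities (via Nash's inequality, or Moser iteration) then delivers
\begin{equation*}
\left( \int_S v^{2m/(m-2)}\, d\mu_t \right)^{(m-2)/m} \;\leq\; C \int_S \left( |\nabla v|^2 + v^2 \right) d\mu_t
\end{equation*}
for every $v \in W^{1,2}_B(S)$, with $C$ depending only on $g_0$ and $m$; the lower-order term $v^2$ appears precisely because the ultracontractivity bound is available only for $s\leq\tau_0$ rather than for all $s>0$.

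The hard part will be Step 1: verifying that Perelman's entropy monotonicity genuinely survives in the transverse setting and produces the right $-\tfrac{m}{2}\log\tau$ (or stronger) dependence with a constant independent of $t$, and that $g_0$ itself satisfies a basic log-Sobolev inequality on a bounded $\tau$-interval --- the uniform transverse scalar curvature bound of \cite{TristC} being essential here. A secondary difficulty, and exactly what the paper's general results on basic function spaces and on the heat-kernel decomposition are designed to settle, is running the semigroup and ultracontractivity machinery of Step 2 consistently in the foliated category: one must check that passing to basic functions preserves self-adjointness, positivity-preservation, and compatibility with the $m$-dimensional volume form $d\mu_t$, so that the inequality obtained is the one stated for $S$ and not merely a transverse analogue.
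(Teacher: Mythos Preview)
Your proposal is correct and follows the same three-step architecture as the paper: entropy monotonicity yields a uniform basic log-Sobolev inequality, Davies' method converts this to a heat-kernel upper bound, and the interpolation machinery (Riesz--Thorin and Marcinkiewicz in the basic category) upgrades this to the Sobolev inequality. The one substantive organizational difference is where the Perelman-type bound on $R^{T}$ enters. You invoke it immediately in Step~1 to absorb the curvature term and then run the semigroup argument for the bare basic Laplacian $\Delta_{B}$; the paper instead retains $\tfrac{1}{4}R^{T}$ in the log-Sobolev inequality (Proposition~\ref{restricted log Sob}), applies Davies' argument to the Schr\"odinger operator $\Delta-\tfrac{1}{4}R^{T}-\Theta-1$ with $\Theta=\sup_{x}R^{T}_{-}(x,0)$ (so that the potential is strictly positive and the heat-kernel theory of \S\ref{basic heat kernel section}, which is stated for $\Psi>0$, applies verbatim, including the long-time decay), arrives first at the sharper intermediate inequality with $\tfrac{1}{4}R^{T}v^{2}$ on the right-hand side, and only then invokes Theorem~\ref{Pman thm} in the last line. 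Your ordering is marginally cleaner; the paper's stays closer to Zhang's K\"ahler argument and produces a more refined intermediate estimate, but both routes are valid.
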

This result was obtained in the K\"ahler case in \cite{QZhang}, \cite{QZhangEr}, and subsequently developed further in \cite{Hsu}, \cite{RugYe}, and the book \cite{ZhangBk}.  However, the proof there does not carry over to the Sasaki setting.  Indeed, we have several difficulties to overcome, not the least of which is the absence of smooth, basic cutoff functions.  We overcome these difficulties by observing that the Reeb foliation imposes a great deal of symmetry on the heat kernel of any operator $L = \Delta - \Psi$, where $\Psi$ is a smooth, non-negative basic function.  Another difficulty we encounter is the absence of a well developed basic function theory, specifically with regards to basic Lebesgue and Sobolev spaces.  We take up this deficiency in the third section of this paper by defining these spaces, and examining some of their elementary properties.   Of course, the basic function theory on quasi-regular Sasaki manifolds follows immediately from the standard function theory on the orbifold quotient.  While this simpler case will be a guide for our work, we make a concerted effort to develop the theory without assuming any topological structure on the quotient, in order that it hold on irregular Sasaki manifolds.  Here, one must proceed with some care. The absence of basic partitions of unity supported in coordinate neighbourhoods, and particularly basic mollifiers presents a number of challenges, specifically with regards to regularization.  The basic observation is that the heat equation regularizes rough initial data, while also preserving the symmetry of the Reeb field.  The layout of this paper is as follows; in \S2 of this paper we provide a brief introduction to Sasaki geometry.  As we view this paper to be a direct extension of the results in \cite{TristC}, our introduction will be particularly minimal.  Instead, we refer the reader to \cite{BoyGal1}, \cite{Sparks} for a thorough introduction to Sasaki geometry.  In \S3 we develop the theory of basic Sobolev, and $L^{p}$ spaces, and describe the results from the standard theory which we will need.  In \S4 we discuss some properties of heat kernels on a Sasaki manifold, which will be essential for the proof.  In \S5 we prove Theorem~\ref{main theorem}.  The proof of the theorem has essentially three parts.  The first part of the argument consists of employing the monotonicity of the transverse $\mu$ functional proved in \cite{TristC} to obtain a uniform log Sobolev inequality along the flow.  In the second part of the proof, we use the log Sobolev inequality to obtain short time estimates for the heat kernel restricted to the basic functions.  In this part of the argument, the symmetry induced the the Reeb field is of particular importance.  Finally, we discuss how to obtain the uniform Sobolev inequality from the heat kernel estimates.  This final step is standard in the Riemannian case, but requires the developments from \S3 in the Sasaki case.  In the interest of brevity, and to avoid duplication, we will be somewhat terse in our presentation where the argument is a direct adaptation of the arguments in  \cite{QZhang},\cite{QZhangEr},\cite{RugYe}. 

{\bf Acknowledgements:}
I would like to thank my advisor Professor D.H. Phong for his guidance and encouragement, as well as for suggesting this problem.  I would also like to thank Professors Valentino Tosatti and  Gabor Szekelyhidi for many helpful suggestions during the writing of this paper. 

\section{Background}
The material in this section is taken directly from \cite{TristC}.  A Sasaki manifold of dimension $m=2n+1$ is a Riemannian manifold $(S^{2n+1},g)$ with the property that its metric cone $(C(S) = \mathbb{R}_{>0}\times S, \overline{g} = dr^2 + r^{2}g)$ is K\"ahler.  We will always assume that $S$ is closed and compact.  A Sasaki manifold inherits a number of key properties from its K\"ahler cone.  In particular, an important role is played by the Reeb vector field.  
\begin{definition}
The Reeb vector field is $\xi = J(r\partial_{r})$, where $J$ denotes the integrable complex structure on $C(S)$.  
\end{definition}
The restriction of $\xi$ to the slice $\{r=1\}$  is a unit length Killing vector field, and thus its orbits define a one-dimensional foliation of S by geodesics called the Reeb foliation.  Since the local flow of $\xi$ induces isometries of $(S,g)$, we are motivated to introduce the transverse distance function
\begin{equation*}
d^{T}(x,y) = \inf_{\{p\in orb_{\xi}x, q \in orb_{\xi}y\}} dist(p,q),
\end{equation*}
where $orb_{\xi}x$ is the orbit of $x$ under the group of isometries generated by the flow of $\xi$. 

\begin{definition}
A function $f \in C^{\infty}(S)$ is said to be \emph{basic} if $\mathcal{L}_{\xi}f=0$.  The set of smooth basic functions will be denoted by $C^{\infty}_{B}(S)$.
\end{definition}
Let $L_{\xi}$ be the line bundle spanned by the non-vanishing vector field $\xi$.  The contact subbundle $D\subset TS$ is defined as $D= \ker \eta$, where $\eta(X) = g(\xi,X)$ is a contact one form.  Throughout this paper we shall use the non-vanishing 2n+1-form $d\mu := \eta \wedge (d\eta)^{n}$ as the volume form on $S$.  We have the exact sequence
\begin{equation*}
0\rightarrow L_{\xi} \rightarrow TS \xrightarrow{p} Q \rightarrow 0.
\end{equation*}
The Sasaki metric $g$ gives an orthogonal splitting of this sequence $\sigma: Q\rightarrow TS$ so that we identify $Q \cong D$, and
\begin{equation*}
TS = D\oplus L_{\xi}
\end{equation*}
The quotient bundle $Q$ is endowed with a transverse metric $g^{T}$ by restricting $g$ to $D$ and using the splitting map $\sigma$ to identify $Q$ and $D$.  There is also a unique, torsion-free connection on $Q$ which is compatible with the metric $g^{T}$.  This connection is defined by
\begin{displaymath}
\nabla^{T}_{X}V = \left\{ \begin{array}{lr}
	\left(\nabla_{X}\sigma(V)\right)^{p},& \text{if } X \text{ is a section of D}\\
	\left[\xi, \sigma(V)\right]^{p},& \text{if } X=\xi
\end{array}
\right.
\end{displaymath}
where $\nabla$ is the Levi-Civita connection on $(S,g)$ and $p$ is the projection from $TS$ to the quotient $Q$.  This connection is called the \emph{transverse Levi-Civita connection} as it satisfies
\begin{equation*}
\nabla_{X}^{T}Y - \nabla^{T}_{Y}X - [X,Y]^{p}=0,
\end{equation*}
\begin{equation*}
Xg^{T}(V,W) = g^{T}(\nabla_{X}^{T}V,W)+ g^{T}(V, \nabla^{T}_{X}W).
\end{equation*}
In this way it is easy to see that the transverse Levi-Civita connection is the pullback of the Levi-Civita connection on the local Riemannian quotient.  We can then define the transverse curvature operator by
\begin{equation*}
Rm^{T}(X,Y) = \nabla_{X}^{T}\nabla_{Y}^{T} -\nabla_{Y}^{T}\nabla_{X}^{T}-\nabla_{[X,Y]}^{T},
\end{equation*}
and one can similarily define the transverse Ricci curvature, and the transverse scalar curvature.  A transverse metric $g^{T}$ is called a \emph{transverse Einstein metric} if it satisfies
\begin{equation*}
Ric^{T}=cg^{T}
\end{equation*}
for some constant $c$.  When the basic first Chern class is signed, say $c^{1}_{B}(S) = \kappa[g^{T}_{0}] $ we define the (normalized) Sasaki-Ricci flow by
\begin{equation}\label{SRF}
\frac{\partial g^T}{\partial t} = -Ric^{T} + \kappa g^{T}.
\end{equation}
This flow on transverse metrics was first studied in \cite{SmoWaZa} where the authors obtained long time existence for any $\kappa$, and exponential convergence in the case $\kappa \leq 0$.  On a foliated manifold, one can also consider the transverse Ricci flow
\begin{equation}\label{TRF}
\frac{\partial g^{T}}{\partial t}  = - Ric^{T},
\end{equation}
which was originally studied in \cite{Minoo} where they obtained short time existence.  On a Sasaki manifold, the transverse Ricci flow plays the analog of the unnormalized Ricci flow on a K\"ahler manifold.  Indeed, one can pass between a solution to~(\ref{SRF}) and~(\ref{TRF}) by scaling the metric and dilating time.  On a Sasaki manifold the analogies between the K\"ahler-Ricci flow and the flow~(\ref{SRF}) run deep.  For example, in \cite{TristC} (see also \cite{WHe} for another approach) the transverse $\mathcal{W}$ and transverse $\mu$ functionals were defined, and shown to be monotone along the flow, thereby opening Perelman's methods to the Sasaki setting.  Below, we define the transverse $\mu$ functional (from which one can infer the definition of the transverse $\mathcal{W}$ functional), and state a proposition concerning its monotonicity.  We refer the reader to~\cite{TristC} for more details concerning these functionals.  
\begin{definition}
Let $(S,g)$ be a Sasaki manifold, and $\tau>0$ a positive real number.  Set
\begin{equation*}
\chi = \left\{f\in C^{\infty}_{B} : \int_{S}(4\pi \tau)^{-n}e^{-f}d\mu=1\right\}.
\end{equation*}
Then define the functional $\mu^{T} $ by
\begin{equation*}
\mu^{T}(g,\tau) = \inf_{f \in \chi} (4\pi \tau)^{-n}\int_{S}\left( \tau(\text{R}^{T} + |\nabla f|^{2}) + (f-2n) \right) e^{-f}d\mu.
\end{equation*}
\end{definition}
\begin{prop}[\cite{TristC} Proposition 5.5]\label{mu monotone} 
Let $(S,g_{0})$ be a Sasaki manifold, $g(t)$ a solution of the transverse Ricci flow on $[0,T)$ with $g(0)=g_{0}$.  Assume that $\tau(t)$ is a positive function satisfying $\dot{\tau} =-1$.  Then $\mu^{T}(g(t), \tau(t))$ is increasing.  In particular, we have $\mu^{T}(g(t),r^{2}) \geq \mu^{T}(g(0),t+r^{2})$.
\end{prop}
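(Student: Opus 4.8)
\noindent The plan is to transplant Perelman's proof of the monotonicity of the $\mu$-functional into the transverse, basic category, as is done in \cite{TristC}. Introduce the transverse $\mathcal{W}$-functional
\[
\mathcal{W}^{T}(g,f,\tau) = (4\pi\tau)^{-n}\int_{S}\left(\tau(R^{T}+|\nabla f|^{2}) + f - 2n\right)e^{-f}\,d\mu ,
\]
so that $\mu^{T}(g,\tau)=\inf_{f\in\chi}\mathcal{W}^{T}(g,f,\tau)$, and couple the transverse Ricci flow~(\ref{TRF}) with the backward conjugate transverse heat equation for $f$ (equivalently, for $v:=(4\pi\tau)^{-n/2}e^{-f/2}$) and with $\dot\tau=-1$, all in the normalization conventions of \cite{TristC}. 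Since $f$ is basic and the orbits of $\xi$ are geodesics, the Reeb foliation is minimal, so that $\Delta f$ agrees with the transverse Laplacian and every manipulation below can be carried out with the transverse Levi--Civita connection $\nabla^{T}$.

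The first step is the pointwise monotonicity of $\mathcal{W}^{T}$. Differentiating under the integral sign and integrating by parts against $d\mu$ --- which is legitimate because $\mathcal{L}_{\xi}d\mu=0$ and every integrand is basic --- and invoking the transverse Bochner formula together with the evolution equation for $R^{T}$ under~(\ref{TRF}), both of which are verbatim transverse analogues of the Riemannian identities since $\nabla^{T}$ is the pullback of the Levi--Civita connection on the local quotient, one is led to the Perelman-type identity
\[
\frac{d}{dt}\,\mathcal{W}^{T}(g(t),f(t),\tau(t)) = 2\tau(4\pi\tau)^{-n}\int_{S}\left| Ric^{T}+\nabla^{T}\nabla^{T}f-\frac{1}{2\tau}\,g^{T}\right|^{2}e^{-f}\,d\mu \;\geq\; 0 ,
\]
so that $\mathcal{W}^{T}$ is non-decreasing along the coupled flow.

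The second step is the passage from $\mathcal{W}^{T}$ to $\mu^{T}$. First, the constraint $\int_{S}(4\pi\tau)^{-n}e^{-f}\,d\mu=1$ is preserved by the coupled flow, since the conjugate heat equation conserves total mass. Second, for fixed $(g,\tau)$ the infimum defining $\mu^{T}(g,\tau)$ is attained: putting $w=e^{-f/2}$ turns $\mathcal{W}^{T}$ into a Dirichlet-type energy plus lower-order terms on $W^{1,2}_{B}(S)$, and the basic Sobolev inequality together with the compactness of the basic Sobolev embedding from \S3 produces a minimizer by the direct method; a minimizer satisfies an elliptic Euler--Lagrange equation and is therefore smooth and basic by elliptic regularity in the basic category. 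Now, given $t_{0}<t_{1}$ with $\tau(t_{1})>0$, choose $f_{1}$ realizing $\mu^{T}(g(t_{1}),\tau(t_{1}))$, solve the backward conjugate heat equation on $[t_{0},t_{1}]$ with terminal value $f_{1}$ and $\dot\tau=-1$ --- the solution stays basic, because the flow of $\xi$ acts by isometries commuting with the heat operator and solutions are unique --- and chain the inequalities
\[
\mu^{T}(g(t_{1}),\tau(t_{1})) = \mathcal{W}^{T}(g(t_{1}),f_{1},\tau(t_{1})) \;\geq\; \mathcal{W}^{T}(g(t_{0}),f(t_{0}),\tau(t_{0})) \;\geq\; \mu^{T}(g(t_{0}),\tau(t_{0})) .
\]
The displayed consequence follows by taking $\tau(s)=(t+r^{2})-s$ on $[0,t]$, for which $\tau(0)=t+r^{2}$ and $\tau(t)=r^{2}$.

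The step I expect to be the main obstacle is this passage from $\mathcal{W}^{T}$ to $\mu^{T}$: it requires the existence of a minimizer for the transverse $\mathcal{W}$-functional and its regularity within $C^{\infty}_{B}(S)$, which in turn rests on the basic function theory, as well as the verification that the backward conjugate heat flow preserves the basic condition. The curvature computation behind the monotonicity identity, by contrast, is a routine transcription of Perelman's argument once one notes that every quantity descends to the transverse connection.
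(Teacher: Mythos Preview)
The paper does not prove this proposition; it is quoted as Proposition~5.5 of \cite{TristC} and used as a black box. There is therefore no ``paper's own proof'' to compare against here. Your sketch is the correct outline of the argument in \cite{TristC}: Perelman's monotonicity for $\mathcal{W}$ transplanted to the transverse category, existence and basic regularity of a minimizer via the basic Sobolev theory, and the backward conjugate heat flow preserving the basic condition. Nothing in your proposal is at odds with that source.
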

These functionals were put to use in~\cite{TristC} to generalize a major result of Perelman on the K\"ahler-Ricci flow.  The precise result is
\begin{Theorem}[\cite{TristC} Theorem 1.3]\label{Pman thm}
Let $g^{T}(t)$ be a Sasaki-Ricci flow on a compact Sasaki manifold $(S,\xi)$ of real dimension $2n+1$, and transverse complex dimension n, with $c^{1}_{B}(S)>0$.  Let $u \in C^{\infty}_{B}(S)$ be the transverse Ricci potential.  Then there exists a uniform constant $C$, depending only on the initial metric $g(0)$ so that
\begin{equation}
|R^{T}(g(t))| + |u|_{C^{1}} + diam^{T}(S,g(t)) < C
\end{equation}
where $diam^{T}(S,g(t)) = \sup_{x,y\in S} d^{T}(x,y)$.
\end{Theorem}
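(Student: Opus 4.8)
The plan is to transplant Perelman's \emph{a priori} estimates for the Fano K\"ahler--Ricci flow (in the form recorded by \v{S}e\v{s}um--Tian) to the transverse geometry of the Reeb foliation, with the monotonicity of $\mu^{T}$ (Proposition~\ref{mu monotone}) playing the role of Perelman's $\mathcal{W}$--monotonicity. After rescaling we may assume $\kappa=1$, so that~(\ref{SRF}) reads $\partial_{t}g^{T}=-\mathrm{Ric}^{T}+g^{T}$; since $c^{1}_{B}(S)>0$ we may take the initial metric with $[g^{T}_{0}]$ in the basic class $2\pi c^{1}_{B}(S)$, and this class is preserved by the flow, so there is a transverse Ricci potential $u=u(t)\in C^{\infty}_{B}(S)$, unique up to an additive constant, with $\mathrm{Ric}^{T}-g^{T}=i\partial_{B}\bar\partial_{B}u$. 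First I would record the parabolic identities. Normalizing the constant by $(4\pi\tau_{0})^{-n}\int_{S}e^{-u}\,d\mu=1$ for the $\tau_{0}$ matched to the shrinking rate, $u$ satisfies a heat--type equation $\partial_{t}u=\Delta u+u-a(t)$, where $\Delta$ is the basic Laplacian and $a(t)$ is the spatial constant enforcing the normalization; differentiating the latter shows $a(t)$ is a fixed linear combination of $\int_{S}u\,e^{-u}\,d\mu$ and $\int_{S}|\nabla u|^{2}e^{-u}\,d\mu$. Tracing the potential equation shows $R^{T}-\Delta u$ is a constant, so uniform $C^{2}$ control of $u$ is the same as a uniform bound on $R^{T}$; the transverse volume $\int_{S}d\mu$ is constant along the flow, and the standard evolution equation for $R^{T}$ gives $R^{T}\ge-C$ by the maximum principle.

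The heart of the matter is to convert Proposition~\ref{mu monotone} into uniform bounds. The rescaling identifying~(\ref{SRF}) with~(\ref{TRF}), combined with the transverse logarithmic Sobolev inequality of the fixed metric $g_{0}$ (which bounds $\mu^{T}(g_{0},\tau)$ below for $\tau$ in a bounded range, the transverse analogue of Rothaus's theorem, proved as in the Riemannian case since $S$ is compact), yields a uniform estimate $\mu^{T}(g(t),\sigma)\ge-C$ for all $t\ge0$ and all $\sigma$ below a fixed threshold. From this I would extract two things. First, evaluating the $\mu^{T}$--functional at the competitor $f=u$ and integrating by parts (using $R^{T}-\Delta u=\mathrm{const}$) controls $\int_{S}|u|\,e^{-u}\,d\mu$ and hence $a(t)$; combined with the differential inequality satisfied by the weighted Dirichlet energy along the flow and the finiteness of the total variation of the monotone quantity, this also bounds $\int_{S}|\nabla u|^{2}e^{-u}\,d\mu$. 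Second, the lower bound on $\mu^{T}$, together with the bound on $R^{T}$, yields transverse $\kappa$--noncollapsing, $\mathrm{Vol}^{T}(B^{T}(x,r))\ge\kappa\,r^{2n}$ for $r\le r_{0}$, by Perelman's cutoff argument carried out on a foliation chart; crucially only \emph{locally} supported basic cutoffs are needed here, and those exist, being pullbacks of cutoffs from the local leaf space. Noncollapsing then supplies a uniform transverse Sobolev constant at small scales on which to base parabolic iteration.

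With these ingredients a carefully ordered bootstrap produces the bounds in the theorem. Following Perelman, one first establishes $\|u\|_{C^{0}}\le C$ --- using the weighted integral bounds, the pointwise inequality $ue^{-u}\le e^{-1}$, the noncollapsing, and De Giorgi--Nash--Moser iteration for $\partial_{t}u=\Delta u+u-a$ localized on foliation charts --- then $\|\nabla u\|_{C^{0}}\le C$ by a Bernstein--type maximum principle argument, and then $\|R^{T}\|_{C^{0}}\le C$ by local iteration for the evolution equation of $R^{T}$. Finally, for the transverse diameter: $C^{2}$ control of $u$ gives $\mathrm{Ric}^{T}\ge-Cg^{T}$ and the noncollapsing holds at a fixed scale $r_{0}$, and since the transverse volume $\int_{S}d\mu$ is fixed, a $d^{T}$--geodesic of length $D$ would carry $\sim D/r_{0}$ disjoint transverse balls of definite volume, forcing $D\le C$; the argument descends legitimately to $d^{T}$ because the Reeb flow acts by isometries, so $d^{T}$--balls are pullbacks of geodesic balls in the local leaf--space quotients.

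I expect the main obstacle to be the foliated bookkeeping underpinning the middle steps. One must check that every object invoked can be taken \emph{basic} --- notably the minimizers realizing $\mu^{T}$, whose existence and regularity in the basic category need the transverse analogue of Rothaus's argument --- and must transplant Perelman's noncollapsing, the local-to-global passage in the Moser iteration, and the covering argument to the transverse setting. None of this requires globally supported basic cutoffs: every cutoff that appears lives in a foliation chart, where basic functions are simply functions on the local leaf space. It is the passage from such local estimates to uniform global ones, carried out consistently on the local Riemannian quotients of the Reeb foliation, and the care needed to stay inside the basic category throughout, that carries the weight of the proof.
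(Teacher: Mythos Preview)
The present paper does not contain a proof of this theorem: it is quoted verbatim as \cite{TristC}~Theorem~1.3 and used as a black box in the final paragraph of \S5 to bound $R^{T}$ uniformly. Consequently there is no ``paper's own proof'' to compare your proposal against.

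For what it is worth, your sketch is the expected one --- Perelman's Fano K\"ahler--Ricci flow estimates, in the \v{S}e\v{s}um--Tian form, transplanted to the transverse geometry via the $\mu^{T}$--monotonicity of Proposition~\ref{mu monotone} --- and is broadly consistent with what the cited paper \cite{TristC} carries out. One caution on the internal logic: in Perelman's argument the $C^{0}$ bound on $u$, the scalar curvature bound, and the transverse diameter bound are not obtained by a clean linear bootstrap with De~Giorgi--Nash--Moser iteration as you outline, but rather by an intertwined contradiction argument using annulus estimates and the noncollapsing simultaneously; your ordering (first $\|u\|_{C^{0}}$, then $\|\nabla u\|_{C^{0}}$, then $R^{T}$, then diameter) glosses over the actual dependency structure, in which the diameter control and the potential/scalar bounds must be established together. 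This is a point of exposition rather than a fatal gap, but if you intend this as a genuine proof you should follow the Perelman--\v{S}e\v{s}um--Tian scheme more closely at that step.
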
 

\section{Basic $L^{p}$ and Sobolev Spaces}\label{basic sob spaces}
We begin this section somewhat anachronistically, by introducing the basic Sobolev spaces.  We will provide two alternative definitions of these spaces, and prove they are equivalent.  As mentioned in the introduction, one difficulty is the absence of partitions of unity on Sasaki manifolds, and the lack of a suitable mollifier.  It is easy to see, for example, that basic mollifiers cannot exist when the Sasaki structure is irregular.  Instead, we shall use the symmetry of the manifold under the action induced by the Reeb field to deduce particularly useful properties of the heat kernel.  We remark that the Sobolev theory is somewhat easier to develop than the $L^{p}$ theory, as the presence of at least one weak derivative provides a particularly simple characterization of the ``weakly" basic functions.
\begin{definition}
Define the space $W^{r,2}_{B}$ to be the closure of $C^{\infty}_{B}(S)$ in the norm induced by $W^{r,2}(S)$. 
\end{definition}
As promised, we now provide an alternative definition of the basic Sobolev spaces;

\begin{Lemma}
We have, $ H = \left\{ f \in W^{r,2}(S) : \int_{S} f (\xi \phi) d\mu = 0 \text{  }\text{ for all } \phi \in C^{\infty}(S) \right\} = W^{r,2}_{B}$.
\end{Lemma}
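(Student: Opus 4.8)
The plan is to prove the two sets are equal by establishing both inclusions, with the easy inclusion being $W^{r,2}_B \subseteq H$ and the substantive one being $H \subseteq W^{r,2}_B$. For the easy direction, if $f \in C^\infty_B(S)$, then $\mathcal{L}_\xi f = 0$, so $\int_S f(\xi\phi)\, d\mu = -\int_S (\xi f)\phi\, d\mu = 0$ for every $\phi \in C^\infty(S)$, using that $\xi$ is a Killing field and hence divergence-free with respect to $d\mu = \eta \wedge (d\eta)^n$ (so integration by parts produces no boundary or extra divergence terms). Since the condition $\int_S f(\xi\phi)\,d\mu = 0$ for all $\phi$ is a closed condition under $W^{r,2}$-convergence (the linear functional $f \mapsto \int_S f(\xi\phi)\,d\mu$ is continuous on $W^{r,2}$, indeed on $L^2$), it passes to the closure, giving $W^{r,2}_B \subseteq H$.

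For the reverse inclusion, the natural approach is to use the flow $\varphi_s$ generated by $\xi$ and average. Given $f \in H$, the condition $\int_S f(\xi\phi)\,d\mu = 0$ for all $\phi$ says precisely that the distributional derivative $\xi f = 0$, hence $f$ is invariant under $\varphi_s$ for all $s$ (as an $L^2$, indeed $W^{r,2}$, function): $\varphi_s^* f = f$. So $f$ is already ``basic'' in the weak sense; the issue is purely one of \emph{smooth} approximation — we must produce $f_k \in C^\infty_B(S)$ with $f_k \to f$ in $W^{r,2}$. This is where the heat kernel enters, as foreshadowed in the introduction. Let $e^{t\Delta}$ denote the heat semigroup on $S$. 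For any $f \in W^{r,2}(S)$, $e^{t\Delta}f \in C^\infty(S)$ and $e^{t\Delta}f \to f$ in $W^{r,2}$ as $t \to 0$. The key point is that $\Delta$ commutes with the isometries $\varphi_s$ (since $\xi$ is Killing), hence so does $e^{t\Delta}$; therefore if $\varphi_s^* f = f$ then $\varphi_s^*(e^{t\Delta}f) = e^{t\Delta}f$, i.e. $\mathcal{L}_\xi(e^{t\Delta}f) = 0$, so $e^{t\Delta}f \in C^\infty_B(S)$. Setting $f_k = e^{t_k \Delta}f$ for $t_k \to 0$ gives the desired approximating sequence, proving $f \in W^{r,2}_B$.

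The main obstacle — really the only subtlety — is making rigorous the step ``$\int_S f(\xi\phi)\,d\mu = 0$ for all $\phi$ implies $\varphi_s^* f = f$ in $W^{r,2}$.'' For $f$ smooth this is immediate from $\frac{d}{ds}\varphi_s^* f = \varphi_s^*(\xi f) = 0$, but for $f$ merely in $W^{r,2}$ one argues by duality: the function $s \mapsto \int_S (\varphi_s^* f)\phi\, d\mu = \int_S f (\varphi_{-s}^*\phi)\,d\mu$ is differentiable in $s$ with derivative $-\int_S f\,\xi(\varphi_{-s}^*\phi)\,d\mu = 0$ (applying the hypothesis to the test function $\varphi_{-s}^*\phi$, which is again in $C^\infty(S)$), so $\int_S(\varphi_s^* f)\phi\,d\mu$ is constant in $s$ and equals $\int_S f\phi\,d\mu$; since this holds for all $\phi \in C^\infty(S)$ and $\varphi_s$ is an isometry (so $\varphi_s^* f \in W^{r,2}$ with the same norm), we conclude $\varphi_s^* f = f$. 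One should also record the two standard facts being invoked: that $e^{t\Delta}$ is smoothing with $e^{t\Delta}f \to f$ in $W^{r,2}$, and that $e^{t\Delta}$ commutes with the pullback by any isometry of $(S,g)$; both are elementary but deserve a one-line justification, the latter following from uniqueness for the heat equation together with the fact that $\varphi_s$ commutes with $\Delta$.
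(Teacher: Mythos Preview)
Your proof is correct and follows essentially the same strategy as the paper: both directions are handled the same way, and the key step---showing that $f\in H$ can be approximated in $W^{r,2}$ by smooth basic functions via heat-kernel regularization $e^{t\Delta}f$---is identical in spirit. Your version is somewhat more explicit, arguing through Reeb-flow invariance $\varphi_s^*f=f$ and commutation of $e^{t\Delta}$ with isometries, whereas the paper argues directly from the integral representation $f(x,t)=\int_S P(x,y,t)f(y)\,d\mu(y)$ together with smoothness and symmetry of $P$; but these are two phrasings of the same fact.
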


\begin{proof}
Given $f \in H$, let $f(x,t)$ be the solution to the heat equation on $S$, with $f(x,0) =f$.  Then we can write
\begin{equation*}
f(x,t) = \int_{S} P(x,y,t)f(y)d\mu(y).
\end{equation*}
By the definition of $H$, and the fact that $P(x,y,t)$ is a smooth function for $t>0$ with $P(x,y,t) = P(y,x,t)$, we see immediately that $f(x,t)$ is basic for $t>0$.  Moreover, $f(x,t)$ converges to $f$ in $W^{r,2}$ as $t \rightarrow 0^{+}$.  Thus $H \subset W^{r,2}_{B}$.  The containment $C^{\infty}_{B} \subset H$ is clear.  We need only show that $H$ is closed.  Suppose $\{f_{j}\}$ is a Cauchy sequence in $H$.  Then there exists $f \in W^{r,2}$, and a subsequence (not relabeled), such that $f_{j}$ converges to $f$ in $W^{r,2}$ and $f_{j} \rightarrow f$ pointwise a.e.  Fix $\phi \in C^{\infty}$, and let $\psi = \xi \phi$.  Then $f_{j}\psi \rightarrow f\psi$ a.e., $f_{j}\psi \in L^{1}$, and $|f_{j} \psi| \leq |f_{j}|^{2} + |\psi|^{2}$.
Thus, the dominated convergence theorem yields $0 = \lim_{j} \int_{S} f_{j}(\xi \phi)  = \int_{S} f (\xi \phi)$.
Thus, $f \in H$, and hence $H$ is closed.  This proves the lemma.
\end{proof}


We now proceed to the basic $L^{p}$ spaces.  In a similar fashion as above, we define:

\begin{definition}
Define the space $L^{p}_{B}$ to be the closure of $C^{\infty}_{B}$ in the norm induced by $L^{p}$.
\end{definition}

We now give an alternative description of the spaces $L^{p}_{B}$, which is decidedly less attractive than the characterization of the basic Sobolev spaces due to the absence of a weak derivative. 

\begin{definition}
We say that a measurable function $f$ is basic if there exists a measurable function $g$ such that for any point $x$, $g(x) = g(y)$ for every $y \in orb_{\xi}x$, and $f = g$ a.e.  We say that a Borel set $U \subset S$ is a basic set if the indicator function for $U$, $\chi_{U}$, is a basic function.
\end{definition}
\begin{remark}
We note that in the above description it is clear that a basic measurable function $f$ is almost everywhere equal to a measurable function $g$ with the property that $g$ has an everywhere defined derivative in the $\xi$ direction, and $\xi g \equiv 0$.  This is, of course, not an accident.
\end{remark}

\begin{Lemma}
Let $H = \{ f \in L^{p} : f \text{ is basic } \}$.  Then $H = L^{p}_{B}$.
\end{Lemma}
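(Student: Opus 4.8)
The plan is to mimic the strategy used for the basic Sobolev spaces, using the heat semigroup as a substitute for mollification. The inclusion $L^p_B \subset H$ should be handled first, and is the softer direction: any $f \in L^p_B$ is an $L^p$-limit of a sequence $f_j \in C^\infty_B$, so after passing to a subsequence we may assume $f_j \to f$ a.e.; since each $f_j$ is genuinely constant along $\xi$-orbits, the pointwise a.e.\ limit $f$ agrees a.e.\ with a function constant along orbits (one has to be a little careful here — the natural way is to invoke Fubini relative to the local product structure transverse to the Reeb flow, or alternatively to push the argument through the characterization in the previous Lemma after checking $f \in H \cap L^2 \subset W^{0,2}_B$ locally; but on a closed manifold $L^p \subset L^1$ and the ``basic'' condition $\int f(\xi\phi)\,d\mu = 0$ is equivalent to the orbit-constancy up to null sets, so $f$ is basic in the measurable sense). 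This gives $L^p_B \subset H$ and shows $H$ is closed (a standard consequence: an $L^p$-Cauchy sequence in $H$ has an a.e.-convergent subsequence, and the orbit-constancy condition passes to the limit exactly as in the proof of the previous Lemma).

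For the reverse inclusion $H \subset L^p_B$, the plan is: given a measurable basic $f \in L^p$, let $f(x,t) = \int_S P(x,y,t) f(y)\,d\mu(y)$ be the solution of the heat equation with initial data $f$. For each fixed $t>0$, $f(\cdot,t) \in C^\infty(S)$ by standard parabolic smoothing, and $f(\cdot,t)$ is basic because $f$ is basic and $P(x,y,t)$ is symmetric and invariant under the isometries generated by $\xi$ (the Reeb flow preserves $g$, hence $P$; this is precisely the symmetry observation emphasized in the introduction and already exploited in the Sobolev case). Hence $f(\cdot,t) \in C^\infty_B(S)$ for every $t>0$. It then remains to show $f(\cdot,t) \to f$ in $L^p$ as $t \to 0^+$. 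This follows because the heat semigroup $e^{t\Delta}$ on a compact Riemannian manifold is a strongly continuous contraction semigroup on $L^p(S)$ for $1 \le p < \infty$ (contractivity from $\int P(x,y,t)\,d\mu(y) = 1$ and the Jensen/Riesz--Thorin argument, strong continuity by density of $C^\infty$ and the uniform bound). Therefore $f$ is an $L^p$-limit of elements of $C^\infty_B$, i.e.\ $f \in L^p_B$.

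The main obstacle I expect is the precise justification, in the direction $L^p_B \subset H$, that an a.e.\ limit of orbit-constant functions is (a.e.) orbit-constant when the Reeb foliation is irregular and there is no global product structure or transverse quotient; in the quasi-regular case this is trivial via the orbifold quotient, but in general one must argue locally. The clean fix is to note that $\xi$ generates a one-parameter \emph{group} of isometries (the Reeb flow is complete since $S$ is compact), so ``basic'' for an $L^p$ function $f$ is equivalent to $f \circ \varphi_s = f$ a.e.\ for every time $s$, where $\varphi_s$ is the Reeb flow; this condition manifestly passes to a.e.\ limits and is measure-preserving, and a Fubini-type argument on the flow orbits then recovers the existence of an everywhere-orbit-constant representative (cf.\ the Remark). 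I would also want to double-check that the $p = 1$ and large-$p$ endpoints of the semigroup continuity are stated correctly, but these are classical. Everything else is a direct transcription of the proof of the preceding Lemma with $W^{r,2}$ replaced by $L^p$.
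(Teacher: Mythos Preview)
Your proposal is correct, and for the inclusion $L^{p}_{B}\subset H$ it is essentially identical to the paper's argument: both verify $C^{\infty}_{B}\subset H$ and then show $H$ is closed by passing to an a.e.\ convergent subsequence and checking that orbit-constancy survives the limit.

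For the harder inclusion $H\subset L^{p}_{B}$, however, you take a genuinely different and more direct route. You apply the heat semigroup $e^{t\Delta}$ to an arbitrary basic $f\in L^{p}$ and invoke (i) the $\xi$-invariance of the heat kernel to conclude $e^{t\Delta}f\in C^{\infty}_{B}$, and (ii) the classical strong continuity of $e^{t\Delta}$ on $L^{p}(S)$ for $1\le p<\infty$ to get $e^{t\Delta}f\to f$ in $L^{p}$. The paper instead reduces to the case $f=\chi_{E}$ for a basic Borel set $E$, approximates $\chi_{E}$ in $L^{p}$ by Lipschitz basic functions $f_{\epsilon}$ built from the transverse distance $d^{T}(\cdot,E)$, and only then heat-regularizes each $f_{\epsilon}$; the $L^{p}$ convergence of the regularization is obtained not from semigroup theory but from the maximum principle bound $|g_{\epsilon,j}|\le 1$ together with dominated convergence. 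Your argument is cleaner and avoids the detour through simple functions, but the paper's approach has the advantage that the intermediate steps immediately yield the corollaries recorded afterward (that preimages of open sets are basic, and that basic simple functions are dense in $L^{p}_{B}$), which are needed for the basic Riesz--Thorin and Marcinkiewicz theorems used later in \S5. With your approach those corollaries would require a separate (though easy) argument.
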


\begin{proof}
We begin by showing that $L^{P}_{B} \subset H$.  We clearly have $C^{\infty}_{B} \subset H$, and so it suffices to show that $H$ is a closed subspace of $L^{p}$.  Let $\{f_{j} \}$ be a Cauchy sequence in $H$, and without loss of generality, assume that for every $j$ and every $x \in S$, $f_{j}(x) = f_{j}(y)$ is verified for every $y\in orb_{\xi}x$.  Let $f$ be the limit point of the $f_{j}$'s in $L^{p}$.  By passing to a subsequence we may assume that $f_{j} \rightarrow f$ a.e.  Let $x \in S$ be a point where $\lim_{j} f_{j}(x) = f(x)$.  Since the sequence $f_{j}(x)$ converges, the sequence $f_{j}(y)$ converges for any $y\in orb_{\xi}x$, and we may take $f(y)$ to be given by this limit.  Thus, for almost every $x \in S$, we have that $f(x) = f(y)$ for every $y\in orb_{\xi} x$.  By redefining $f$ on a set of measure zero, we see that $f \in H$.  To show that $H \subset L^{p}_{B}$, it suffices to show that $\chi_{E} \in L^{p}_{B}$ for every basic Borel set $E$.  In fact, we may assume that $E$ is closed, and that if $x \in E$, then $\overline{orb_{\xi}x} \subset E$, as these modifications are measure zero.  For each $0 < \epsilon < 1$, define the function $f_{\epsilon}$ by
\begin{displaymath}
f_{\epsilon}(x) = \left\{ \begin{array}{lr}
	\epsilon^{-1}d^{T}(x,E),&\text{if } d^{T}(x,E) \leq \epsilon \\
	1,& \text{ if } d^{T}(x,E)>\epsilon.
\end{array}
\right.
\end{displaymath}
Note that $f_{\epsilon}=0$ if and only is $x \in E$, since $E$ is closed and basic.  Moreover, we clearly have $f_{\epsilon}$ converging to $1-\chi_{E}$ in $L^{p}$ as $\epsilon \rightarrow 0$.  It is easy to check that  $|f_{\epsilon}(x)-f_{\epsilon}(y)| \leq \epsilon^{-1} d(x,y)$.  It follows by Rademacher's theorem that $f_{\epsilon}$ is a.e. differentiable, and $|\nabla f_{\epsilon}| \leq \epsilon^{-1}$.  It is clear that $\xi f_{\epsilon} =0$ a.e.  Thus, $f_{\epsilon} \in W^{1,2}_{B}(S)$, and so there exists a sequence of functions $g_{\epsilon, j} \in C^{\infty}_{B}$ with $g_{\epsilon, j} \rightarrow f_{\epsilon}$ in $L^{2}$ as $j \rightarrow \infty$.  The $g_{\epsilon, j}$ are obtained via heat equation regularization, and so by the maximum principle $|g_{\epsilon, j} | \leq 1$.  By passing to a subsequence, we may assume that $g_{\epsilon, j}$ converges to $f_{\epsilon}$ pointwise almost everywhere and by the dominated convergence theorem, $g_{\epsilon, j} \rightarrow f_{\epsilon}$ in $L^{p}$.  By passing to a diagonal subsequence, we then obtain the existence of a sequence of smooth, basic functions converging to $\chi_{E}$ in $L^{p}$.  Since the indicator functions for basic sets are clearly dense in $H$, we are done.
\end{proof}

We now note a few corollaries of the above proof, which show that $L^{p}_{B}$ has many properties in common with the usual $L^{p}$ space;

\begin{corollary}
If $f \in L^{p}_{B}$ and $U\subset \mathbb{R}$ is any open set, then $f^{-1}(U)$ is a basic subset of $S$.
\end{corollary}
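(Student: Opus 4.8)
The plan is to reduce the statement immediately to the alternative description of $L^{p}_{B}$ provided by the preceding lemma. Since $f \in L^{p}_{B} = H$, the function $f$ is basic in the sense of the definition above: there is a measurable function $g$ which is genuinely constant along every Reeb orbit, i.e. $g(x) = g(y)$ whenever $y \in orb_{\xi}x$, and which satisfies $f = g$ almost everywhere. The whole argument then consists in transferring the desired property from $f$ to this orbit-constant representative and back.

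First I would work with $g$ in place of $f$. Because $g$ is constant on each orbit, the preimage $g^{-1}(U)$ is saturated, that is, it is a union of full Reeb orbits; and it is measurable since $g$ is measurable and $U$ is open (any Borel set $U$ would do here). Hence $\chi_{g^{-1}(U)}$ is a measurable function that is everywhere constant along orbits, so $g^{-1}(U)$ is a basic set in the sense of the definition. If one wishes, one may at this stage replace $g^{-1}(U)$ by a Borel, orbit-saturated set agreeing with it off a null set.

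Finally I would compare $f^{-1}(U)$ and $g^{-1}(U)$. Their symmetric difference is contained in the null set $\{x : f(x) \neq g(x)\}$, so $\chi_{f^{-1}(U)} = \chi_{g^{-1}(U)}$ almost everywhere; thus $\chi_{f^{-1}(U)}$ agrees a.e. with a measurable, orbit-constant function, which is exactly the assertion that $\chi_{f^{-1}(U)}$ is basic, i.e. that $f^{-1}(U)$ is a basic subset of $S$. There is no genuine obstacle in this argument once the previous lemma is in hand; the only point deserving a word of care is the elementary observation that modifying $f$ on a set of measure zero alters $f^{-1}(U)$ only by a set of measure zero, so that basicness of the level sets is a property of the a.e.\ equivalence class of $f$ and the choice of representative is harmless.
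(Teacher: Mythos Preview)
Your argument is correct and is precisely the reasoning the paper has in mind: the corollary is stated without proof as an immediate consequence of the identification $L^{p}_{B}=H$, and you have simply spelled out the details by passing to the orbit-constant representative $g$ and observing that $f^{-1}(U)$ and $g^{-1}(U)$ differ by a null set. There is no discrepancy in approach.
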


\begin{corollary}\label{simple basic dense}
The basic simple functions, $\Sigma_{B}$, are dense in $L^{p}_{B}$ for every $1\leq p < \infty$.
\end{corollary}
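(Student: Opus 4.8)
The plan is to run the classical approximation of an $L^p$ function by simple functions and to observe that, applied to an element of $L^p_B$, this construction automatically produces \emph{basic} simple functions; that these in turn lie in $L^p_B$ is exactly the content of the preceding Lemma, which identifies $L^p_B$ with the space of basic $L^p$ functions. So the argument is essentially measure theory plus bookkeeping.

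First I would reduce to the bounded case. Given $f\in L^p_B$, the preceding Lemma says $f$ is basic, so after modifying on a null set we may assume $f$ is constant along the orbits of $\xi$. The truncations $f_N:=\max(-N,\min(N,f))$ are again constant along orbits, and $f_N\to f$ in $L^p$ by dominated convergence — here one uses that $S$ is compact, hence of finite measure, and that $|f_N-f|\le |f|\in L^p$. Since $f_N$ is a bounded basic function on a finite measure space it lies in $L^p$ and is basic, hence in $L^p_B$ by the Lemma. Thus it suffices to approximate a bounded $f\in L^p_B$, say with $|f|\le N$, by basic simple functions.

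For such an $f$, take the usual dyadic partition of the range: for each $n$ and each integer $k$ with $-N2^n\le k<N2^n$ set $E_{k,n}:=f^{-1}\big([k2^{-n},(k+1)2^{-n})\big)$ and $s_n:=\sum_k k2^{-n}\,\chi_{E_{k,n}}$. Then $s_n$ is a simple function with $|s_n-f|\le 2^{-n}$ pointwise, so $s_n\to f$ uniformly, and hence in $L^p$ since the measure is finite. Each $E_{k,n}$ is a basic Borel set: writing $f^{-1}\big([a,b)\big)=f^{-1}\big((-\infty,b)\big)\setminus f^{-1}\big((-\infty,a)\big)$ exhibits it as a difference of basic Borel sets by the preceding Corollary, and the basic Borel sets are plainly closed under finite differences and unions. (Equivalently, $s_n=\phi_n\circ f$ for a Borel function $\phi_n$, and a Borel function of a function constant along orbits is again constant along orbits.) Hence $s_n\in\Sigma_B$, and, being bounded and basic, $s_n\in L^p_B$.

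Since $s_n\to f$ in $L^p$, this proves density. I do not expect a genuine obstacle: the only ingredient beyond finiteness of the measure and the definition of ``basic set'' is the identity $L^p_B=\{f\in L^p:f\ \text{basic}\}$ — equivalently, that a bounded basic function is an $L^p$-limit of smooth basic functions — and this is precisely the previous Lemma, which already carries the weight of the heat-kernel regularization. If anything needs care it is making sure the approximants are not merely basic as measurable functions but actually lie in $L^p_B$; the reduction above isolates that point and dispatches it via the Lemma.
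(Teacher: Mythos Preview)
Your argument is correct and is precisely the natural way to unpack the paper's one-line remark that this is a ``corollary of the above proof'': the Lemma already shows that $\chi_E\in L^p_B$ for basic Borel sets $E$ and identifies $L^p_B$ with the basic $L^p$ functions, and the paper's own proof even uses (without proof) the line ``the indicator functions for basic sets are clearly dense in $H$''---which is exactly what your dyadic approximation establishes. So your proposal is the intended argument, spelled out in full.
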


\begin{corollary}
Let p,q be conjugate exponents.  Suppose that $g \in L^{q}_{B}(S)$.  Then
\begin{equation*}
\|g\|_{q} = \sup \left\{ \left|\int_{S} fg d\mu \right| : f\in \Sigma_{B} \text{ and } \|f\|_{p} =1 \right\}.
\end{equation*}
\end{corollary}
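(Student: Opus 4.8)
The plan is to reduce this identity to the classical duality characterization $\|g\|_q = \sup\{\,|\int_S fg\,d\mu| : f \text{ simple},\ \|f\|_p = 1\,\}$, exploiting the two facts just proved: the preceding Lemma, which identifies $L^q_B$ with the set of basic functions in $L^q$, and Corollary~\ref{simple basic dense}, which asserts that $\Sigma_B$ is dense in $L^p_B$ for every finite $p$.

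First I would note that the inequality ``$\geq$'' (the supremum is at most $\|g\|_q$) is immediate from H\"older's inequality: for $f \in \Sigma_B$ with $\|f\|_p = 1$ one has $|\int_S fg\,d\mu| \leq \|f\|_p\,\|g\|_q = \|g\|_q$. The content is therefore to produce, for each $\epsilon > 0$, a function $f \in \Sigma_B$ with $\|f\|_p = 1$ and $|\int_S fg\,d\mu| \geq \|g\|_q - \epsilon$. We may assume $g$ is not the zero element of $L^q$, as otherwise there is nothing to prove.

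For $1 < q < \infty$ I would use the standard extremal function $h := \|g\|_q^{1-q}\,|g|^{q-1}\,\mathrm{sgn}(g)$. Since $g \in L^q_B$, the preceding Lemma furnishes a representative of $g$ that is genuinely constant along the $\xi$-orbits, and the same formula shows $h$ has this property too, so $h$ is basic; moreover, since $(q-1)p = q$, we get $\|h\|_p^p = \|g\|_q^{-q}\int_S |g|^q\,d\mu = 1$, so $h \in L^p$, hence $h \in L^p_B$ again by the preceding Lemma, and of course $\int_S hg\,d\mu = \|g\|_q$. Now Corollary~\ref{simple basic dense} provides $f_k \in \Sigma_B$ with $f_k \to h$ in $L^p$; then $\|f_k\|_p \to 1$, so for large $k$ the functions $\tilde f_k := f_k / \|f_k\|_p$ lie in $\Sigma_B$, satisfy $\|\tilde f_k\|_p = 1$, and (again by H\"older) $\int_S \tilde f_k g\,d\mu \to \int_S hg\,d\mu = \|g\|_q$. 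Any $\tilde f_k$ with $k$ large enough then works.

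The endpoint cases require no density and can be done by hand. If $q = 1$, then $f := \mathrm{sgn}(g)$ is a basic simple function --- its level sets are basic because $g$ is, by the first corollary above --- with $\|f\|_\infty = 1$ and $\int_S fg\,d\mu = \|g\|_1$. If $q = \infty$ (here $g$ is still basic, e.g.\ because $L^\infty_B \subset L^2_B$ on the compact manifold $S$), then for $\epsilon > 0$ the level set $E := \{\,x : |g(x)| > \|g\|_\infty - \epsilon\,\}$ has positive measure and is basic (by the first corollary above), so with $c := (\int_S \chi_E\,d\mu)^{-1}$ the function $f := c\,\mathrm{sgn}(g)\,\chi_E$ lies in $\Sigma_B$, has $\|f\|_1 = 1$, and satisfies $\int_S fg\,d\mu \geq \|g\|_\infty - \epsilon$. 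I do not anticipate any real obstacle in this argument; the only point where the theory developed in this section is needed, rather than the classical statement on all of $L^p(S)$, is the verification that the (near-)extremal test function can be taken to be a basic simple function, and that is precisely what the identification $L^p_B = \{\text{basic } L^p \text{ functions}\}$ together with Corollary~\ref{simple basic dense} provide.
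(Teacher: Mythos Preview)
The paper states this corollary without proof, presenting it as an immediate consequence of the preceding lemma and Corollary~\ref{simple basic dense}; your argument correctly supplies the details and is exactly the intended one. The core case $1<q<\infty$ is handled cleanly: the classical extremal $h=\|g\|_q^{1-q}|g|^{q-1}\operatorname{sgn}(g)$ is basic because $g$ is, hence lies in $L^p_B$ by the lemma, and then Corollary~\ref{simple basic dense} lets you approximate it in $L^p$ by elements of $\Sigma_B$.

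One minor remark on the endpoint $q=\infty$: the paper defines $L^p_B$ as the $L^p$-closure of $C^\infty_B$, and the lemma characterizing $L^p_B$ as the basic $L^p$ functions is only stated (and proved via density arguments) for finite $p$. Your justification ``$L^\infty_B\subset L^2_B$'' is fine for concluding $g$ is basic, but it is perhaps more direct to note that any $g\in L^\infty_B$ is a uniform limit of smooth basic functions, hence continuous and constant on orbit closures, so the level sets $\{|g|>\|g\|_\infty-\epsilon\}$ are genuinely basic open sets. This is a cosmetic point; your argument goes through as written.
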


The above lemmas allow us to obtain basic versions of the Riesz-Thorin, and the Marcinkiewicz interpolation theorems. The proofs of these theorems are elementary generalizations of the standard proofs.  The proofs can be found in any standard book on measure theory, for example \cite{Folland}.  In particular, we have:

\begin{Theorem}[Riesz-Thorin interpolation theorem]
Let $(S,g)$ be a Sasaki manifold with measure induced by the standard volume form.  Suppose that $p_{0}, p_{1}, q_{0}, q_{1} \in [1,\infty]$.  For $0<t<1$, define $p_{t}$ and $q_{t}$ by
\begin{equation*}
\frac{1}{p_{t}} = \frac{1-t}{p_{0}} + \frac{t}{p_{1}} \text{ , }\text{   } \frac{1}{q_{t}} = \frac{1-t}{q_{0}} + \frac{t}{q_{1}}.
\end{equation*}
If $T$ is a linear map from $L^{p_{0}}_{B} + L^{p_{1}}_{B}$ into $L^{q_{0}}_{B} + L^{q_{1}}_{B}$ such that $\|Tf\|_{q_{0}} \leq M_{0}\|f\|_{p_{0}}$ for $f \in L^{p_{0}}_{B}$ and $\|Tf\|q_{1} \leq M_{1}\|f\|_{p_{1}}$ for $f \in L^{p_{1}}_{B}$, then $\|Tf\|_{q_{t}} \leq M_{0}^{1-t}M_{1}^{t} \|f \|_{p_{t}}$, for each $0< t < 1$.
\end{Theorem}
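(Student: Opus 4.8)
The plan is to run the classical complex-interpolation argument via the Hadamard three lines lemma (in the form given in \cite{Folland}), inserting the word ``basic'' at each stage and checking that nothing ever leaves the basic category.

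First I would reduce, using Corollary~\ref{simple basic dense} and homogeneity, to proving the bound $\|Tf\|_{q_t}\le M_0^{1-t}M_1^{t}$ for a single basic simple function $f$ with $\|f\|_{p_t}=1$ (the degenerate case $p_0=p_1=\infty$ being handled the same way with $f$ in place of the $f_z$ below). I would write $f=\sum_j a_j\chi_{E_j}$ over pairwise disjoint \emph{basic} Borel sets $E_j$ with $a_j=|a_j|e^{i\theta_j}$, which is legitimate because the level sets of a basic simple function are basic, by the corollary on preimages of open sets. Fixing also a basic simple $h=\sum_k b_k\chi_{F_k}$ over disjoint basic sets with $\|h\|_{q_t'}=1$, where $q_t'$ is conjugate to $q_t$, and setting $1/p_z=(1-z)/p_0+z/p_1$, $1/q_z'=(1-z)/q_0'+z/q_1'$, I would introduce the analytic families
\[
f_z=\sum_j |a_j|^{\,p_t/p_z}\,e^{i\theta_j}\,\chi_{E_j},\qquad h_z=\sum_k |b_k|^{\,q_t'/q_z'}\,e^{i\arg b_k}\,\chi_{F_k}
\]
(with the evident conventions when $p_0=p_1$, $q_0=q_1$, or a coefficient vanishes). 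The essential point for us is that $f_z$ and $h_z$ remain \emph{basic} simple functions for every $z$, being carried by the fixed basic sets $E_j,F_k$; moreover $f_t=f$, $h_t=h$, and a one-line computation using $\operatorname{Re}(1/p_{is})=1/p_0$ gives $\|f_{is}\|_{p_0}=\|f_{1+is}\|_{p_1}=1$ and $\|h_{is}\|_{q_0'}=\|h_{1+is}\|_{q_1'}=1$ for all real $s$.

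Next I would examine $F(z)=\int_S (Tf_z)\,h_z\,d\mu$. Since $T$ is linear with range inside $L^{q_0}_B+L^{q_1}_B$ and $S$ is compact (so $L^{q_0}_B+L^{q_1}_B\subset L^1_B$ and $h_z$ is bounded), $F(z)$ expands into a finite sum of terms $\bigl(\int_S (T\chi_{E_j})\chi_{F_k}\,d\mu\bigr)\,|a_j|^{p_t/p_z}|b_k|^{q_t'/q_z'}e^{i\theta_j}e^{i\arg b_k}$; hence $F$ is entire and bounded on the closed strip $0\le\operatorname{Re}z\le 1$. On the boundary lines, Hölder's inequality and the two hypotheses give $|F(is)|\le\|Tf_{is}\|_{q_0}\|h_{is}\|_{q_0'}\le M_0$ and $|F(1+is)|\le M_1$, so the three lines lemma yields $|F(t)|=\bigl|\int_S (Tf)\,h\,d\mu\bigr|\le M_0^{1-t}M_1^{t}$. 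Letting $h$ range over all basic simple functions of $L^{q_t'}$-norm one, the duality corollary above (together with the converse to Hölder, which carries over to the basic setting by the same density arguments and uses $Tf\in L^1_B$) upgrades this to $Tf\in L^{q_t}_B$ with $\|Tf\|_{q_t}\le M_0^{1-t}M_1^{t}$; density of basic simple functions then extends the estimate to all $f\in L^{p_t}_B$, which also justifies the opening reduction.

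I expect the complex-analytic heart of the argument to be entirely routine; the only real obstacle is the bookkeeping that confirms every object stays basic --- that level sets of basic simple functions are basic (so that $f_z,h_z$ are basic), that $Tf_z$ is basic (which is precisely the hypothesis that $T$ maps into $L^{q_0}_B+L^{q_1}_B$, and is why one cannot merely quote the Riemannian statement), and that the $L^{q_t}_B$-norm admits the duality description of the corollary above, this last point relying on $\mu(S)<\infty$. Granting these, the proof is identical to the standard one, e.g.\ in \cite{Folland}.
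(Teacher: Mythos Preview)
Your proposal is correct and follows exactly the approach the paper intends: the paper gives no proof at all, stating only that the argument is an elementary generalization of the standard proof in \cite{Folland}, resting on the density of basic simple functions (Corollary~\ref{simple basic dense}) and the basic duality corollary. You have carried out precisely that generalization --- the Hadamard three lines argument from Folland with the checks that $f_z$, $h_z$, and $Tf_z$ remain basic --- so there is nothing to compare.
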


\begin{Theorem}[Marcinkiewicz interpolation theorem]
Let $(S,g)$ be a Sasaki manifold, with measure induced by the standard volume form.  Suppose that 
$p_{0}, p_{1}, q_{0}, q_{1} \in [1,\infty]$, such that $p_{0} \leq q_{0}$, $p_{1} \leq q_{1}$, and $q_{0} \ne q_{1}$.For $0<t<1$, define $p$ and $q$ by
\begin{equation*}
\frac{1}{p} = \frac{1-t}{p_{0}} + \frac{t}{p_{1}} \text{ , }\text{   } \frac{1}{q} = \frac{1-t}{q_{0}} + \frac{t}{q_{1}}.
\end{equation*}
If $T$ is a sublinear map from $L^{p_{0}}_{B}$ to the space of basic measurable functions on S, which is both weak type $(p_{0},q_{0})$ and weak type $(p_{1},q_{1})$, then T is strong type $(p,q)$.  That is, if $[Tf]_{q_{j}} \leq C_{j} \|f\|_{p_{j}}$ for $j=0,1$, then $\|Tf\|_{q} \leq B_{p}\|f\|_{p}$ where $B_{p}$ depends only on $p_{j}, q_{j}, C_{j}, p$.  Moreover, for $j=0,1$, $B_{p}|p-p_{j}|$ (respectively $B_{p}$) remains bounded as $p\rightarrow p_{j}$ if $p_{j} <\infty$ (respectively $p_{j} =\infty$).
\end{Theorem}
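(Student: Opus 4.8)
The plan is to run the classical proof of the Marcinkiewicz interpolation theorem (see \cite{Folland}) essentially verbatim, checking only that each auxiliary function it produces remains basic, so that the weak type hypotheses on $T$ — which are imposed only on basic functions — may be invoked at the appropriate moments. Since $1/p$ is a convex combination of $1/p_{0}$ and $1/p_{1}$, we have $\min(p_{0},p_{1}) \leq p \leq \max(p_{0},p_{1})$; relabelling if necessary, we may assume $p_{0}\leq p\leq p_{1}$, the diagonal case $p_{0}=p_{1}$ (where $p=p_{0}=p_{1}$) being included. The single device that requires comment is truncation: given $f \in L^{p}_{B}$ and a height $c > 0$, set
\[
f^{c} = f\,\chi_{\{|f| > c\}}, \qquad f_{c} = f - f^{c} = f\,\chi_{\{|f| \leq c\}}.
\]
Since $|f|\in L^{p}_{B}$, the set $\{|f| > c\} = |f|^{-1}\big((c,\infty)\big)$ is a basic Borel set by the Corollary above, so $\chi_{\{|f|>c\}}$ is a basic function; as a product of basic measurable functions is again basic, both $f^{c}$ and $f_{c}$ are basic measurable. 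A Chebyshev estimate gives $\int_{S}|f^{c}|^{p_{0}}\,d\mu \leq c^{\,p_{0}-p}\int_{S}|f|^{p}\,d\mu < \infty$ and $\int_{S}|f_{c}|^{p_{1}}\,d\mu \leq c^{\,p_{1}-p}\int_{S}|f|^{p}\,d\mu < \infty$, whence, by the Lemma identifying $L^{r}_{B}$ with the space of basic $L^{r}$ functions, $f^{c} \in L^{p_{0}}_{B}$ and $f_{c} \in L^{p_{1}}_{B}$. Thus the weak type $(p_{0},q_{0})$ and $(p_{1},q_{1})$ bounds apply to $f^{c}$ and $f_{c}$ respectively.

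From here the argument is the Euclidean one. First I would record the layer-cake identity $\|Tf\|_{q}^{q} = q\int_{0}^{\infty}\lambda^{q-1}\,|\{\,|Tf| > \lambda\,\}|\,d\lambda$, valid for any measurable function. Sublinearity of $T$ gives $|Tf| \leq |Tf^{c}| + |Tf_{c}|$ pointwise, hence $|\{|Tf| > \lambda\}| \leq |\{|Tf^{c}| > \lambda/2\}| + |\{|Tf_{c}| > \lambda/2\}|$, and the two weak type bounds control the right-hand side by $\big(2C_{0}\|f^{c}\|_{p_{0}}/\lambda\big)^{q_{0}}$ and $\big(2C_{1}\|f_{c}\|_{p_{1}}/\lambda\big)^{q_{1}}$. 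One then chooses the truncation height $c=c(\lambda)$ appropriately (a constant times a suitable positive power of $\lambda$), substitutes the definitions of $f^{c}$ and $f_{c}$, and invokes Tonelli's theorem to interchange the $\lambda$-integral with the integral over $S$. What remains is a routine one-variable estimate producing $\|Tf\|_{q} \leq B_{p}\|f\|_{p}$ with $B_{p}$ depending only on $p_{j},q_{j},C_{j},p$, together with the asserted behaviour of $B_{p}|p-p_{j}|$ (respectively $B_{p}$) as $p \to p_{j}$. The cases $p_{0} = \infty$ or $p_{1} = \infty$ are handled exactly as in the classical proof, one of the two truncations being vacuous.

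I expect the only step that genuinely uses the basic $L^{p}$ theory of this section — and hence the only possible obstacle — to be the claim that the truncated pieces again belong to $L^{p_{0}}_{B}$ and $L^{p_{1}}_{B}$: this is precisely where the Corollary on preimages of open sets and the Lemma identifying $L^{r}_{B}$ with the basic $L^{r}$ functions are indispensable, since without them the weak type hypotheses could not be applied to the truncations. Everything downstream of that observation is measure-theoretic bookkeeping identical to the Euclidean proof, and we refer to \cite{Folland} for the details.
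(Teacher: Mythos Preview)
Your proposal is correct and matches the paper's approach: the paper does not give a detailed proof but simply asserts that the preceding lemmas and corollaries (density of basic simple functions, the preimage corollary, and the identification of $L^{r}_{B}$ with basic $L^{r}$ functions) allow one to run the standard proof from \cite{Folland} verbatim. You have correctly isolated the one place where the basic theory is genuinely needed---namely that the truncations $f^{c}$ and $f_{c}$ remain in $L^{p_{0}}_{B}$ and $L^{p_{1}}_{B}$---and your justification via the preimage corollary and the characterization lemma is exactly what the paper has in mind.
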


\section{The Basic Heat Kernel}\label{basic heat kernel section}
In this section we examine the heat kernel on $L^{2}$ associated to any elliptic operator $L = \Delta - \Psi$ for  $\Psi$ a positive, smooth, basic potential function.  We will show that the restriction of the heat kernel to $L^{2}_{B}$ is defined by a smooth, basic function for $t>0$.  Our construction depends strongly on the following lemmas, which we apply to the \emph{positive} operator $-L$.

\begin{Lemma}[\cite{KamTond} Proposition 4.5]\label{basic Rellich}
$\forall r\geq 0$ and $t>0$ the inculsion $H^{r+t}_{B}(S) \hookrightarrow H^{r}_{B}(S)$ is compact.
\end{Lemma}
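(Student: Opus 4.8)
The plan is to deduce this from the classical Rellich--Kondrachov compactness theorem on the closed compact manifold $S$, using only that, by the definition in \S\ref{basic sob spaces}, $H^{s}_{B}(S)$ is a \emph{closed} subspace of $W^{s,2}(S)$ for every $s$. First I would recall that on a closed manifold the inclusion $H^{s+t}(S)\hookrightarrow H^{s}(S)$ is compact for all real $s$ and all $t>0$; for integer orders this is Rellich--Kondrachov, and the general case follows either by conjugating with the elliptic isomorphism $(1+\Delta)^{\sigma/2}$ to reduce to the integer case, or directly from the spectral decomposition of $\Delta$. This is the only piece of hard analysis, and it is entirely standard.

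Next I would record the two elementary facts about the basic spaces that are needed. Since $W^{r+t,2}$-convergence implies $W^{r,2}$-convergence, any sequence in $C^{\infty}_{B}(S)$ that converges in $W^{r+t,2}$ converges, to the same limit, in $W^{r,2}$; hence $H^{r+t}_{B}(S)\subset H^{r}_{B}(S)$, and the inclusion $\iota_{B}\colon H^{r+t}_{B}(S)\to H^{r}_{B}(S)$ is simply the restriction of the classical inclusion $\iota\colon H^{r+t}(S)\to H^{r}(S)$ to this closed subspace. Also, $H^{r}_{B}(S)$ being closed in $H^{r}(S)$ means that any subset of it that is precompact in $H^{r}(S)$ is precompact in $H^{r}_{B}(S)$.

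Finally I would assemble these. Let $\{u_{j}\}$ be a bounded sequence in $H^{r+t}_{B}(S)$; it is then bounded in $H^{r+t}(S)$, so by the classical compactness a subsequence converges in $H^{r}(S)$ to some limit $u$. Each $u_{j}$ lies in the closed subspace $H^{r}_{B}(S)$, so $u\in H^{r}_{B}(S)$, and the subsequence converges to $u$ in the $H^{r}_{B}(S)$ norm. This shows $\iota_{B}$ is compact. There is no genuine obstacle here; the only points meriting care are that the classical Rellich lemma is invoked for Sobolev spaces of arbitrary (possibly non-integer, possibly large) order on $S$, and that closedness of the basic spaces — immediate from their definition as closures — is what lets one pass from precompactness in $H^{r}(S)$ to precompactness in $H^{r}_{B}(S)$. (This is, of course, why the statement is simply quoted from \cite{KamTond}.)
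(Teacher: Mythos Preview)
Your argument is correct. The paper does not supply its own proof of this lemma; it is quoted verbatim from \cite{KamTond}, so there is nothing to compare against beyond noting that your reduction to the classical Rellich--Kondrachov theorem via the closedness of $H^{s}_{B}(S)$ in $H^{s}(S)$ is exactly the natural argument one would give.
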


\begin{Lemma}\label{basic elliptic reg}
Let $L$ be an elliptic operator, and suppose that $\phi \in H^{r+1}_{B}$ has $L\phi \in H^{r}_{B}$.  Then $\phi \in H^{r+2}_{B}$, and
\begin{equation*}
\| \phi \|_{r+2, B} \leq C_{r} \left( \| L\phi \|_{r,B} + \| \phi \|_{r+1,B} \right).
\end{equation*}
\end{Lemma}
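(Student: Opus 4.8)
The plan is to reduce the basic estimate to the classical (non-basic) elliptic estimate on $S$ together with the characterization of $H^r_B$ as the closure of $C^\infty_B(S)$ in $H^r(S)$. First I would recall that by definition $H^{r+1}_B \subset H^{r+1}(S)$ and the hypothesis $L\phi \in H^r_B \subset H^r(S)$ makes $\phi$ an admissible input for the standard interior (global, since $S$ is closed) elliptic regularity theorem for the operator $L = \Delta - \Psi$, which is elliptic with smooth coefficients on the compact manifold $S$. That theorem immediately gives $\phi \in H^{r+2}(S)$ together with the a priori bound
\begin{equation*}
\|\phi\|_{r+2} \leq C_r\left(\|L\phi\|_r + \|\phi\|_{r+1}\right),
\end{equation*}
where all norms are the ordinary Sobolev norms on $S$. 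Since the basic Sobolev norms are by definition just the restrictions of the ambient norms, the content of the lemma is entirely the assertion that $\phi$ lies in the \emph{basic} space $H^{r+2}_B$, i.e. that $\phi$ can be approximated in $H^{r+2}(S)$ by smooth basic functions.

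To see that $\phi \in H^{r+2}_B$, I would use the heat-flow regularization already exploited in Section~\ref{basic sob spaces}. Let $\phi(x,t)$ be the solution of the ordinary heat equation on $S$ with initial data $\phi$, written via the heat kernel $P(x,y,t)$. As in the proof of the first lemma of Section~\ref{basic sob spaces}, the symmetry $P(x,y,t) = P(y,x,t)$ together with the fact that $\phi$, being in $H^{r+1}_B$, annihilates $\xi\varphi$ for all $\varphi \in C^\infty(S)$ (equivalently, is weakly basic), shows $\phi(\cdot,t)$ is smooth and basic for every $t>0$. On the other hand, $\phi(\cdot,t) \to \phi$ in $H^{r+2}(S)$ as $t\to 0^+$ because the heat semigroup is strongly continuous on every Sobolev space $H^s(S)$ and $\phi \in H^{r+2}(S)$ by the previous paragraph. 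Hence $\phi$ is an $H^{r+2}(S)$-limit of elements of $C^\infty_B(S)$, so $\phi \in H^{r+2}_B$, and the bound above is precisely the claimed inequality once the norms are read as basic norms.

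I do not expect any serious obstacle here: the only point requiring a little care is the interchange of "weakly basic" and "smoothly basic" under the heat flow, and that is exactly the mechanism isolated in the lemmas of Section~\ref{basic sob spaces} — the Reeb symmetry of $P(x,y,t)$ forces the flow to preserve basicness of the initial data while instantly smoothing it. One should note that the elliptic estimate itself does not interact with the foliation at all; it is applied blindly on $S$, and the foliated structure re-enters only through the approximation step. If one prefers to avoid invoking the heat kernel a second time, an alternative is to observe directly that $\mathcal{L}_\xi$ commutes with $L$ (since $\Psi$ is basic and $\xi$ is Killing, so $[\xi,\Delta]=0$), hence differentiating the equation $L\phi = f$ along $\xi$ and using uniqueness of the elliptic problem shows $\xi\phi$ weakly vanishes in $H^r_B$; combined with the classical estimate this again places $\phi$ in $H^{r+2}_B$. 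Either route completes the proof.
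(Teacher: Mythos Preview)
The paper does not actually supply a proof of this lemma; it is stated in Section~4 without argument, alongside the cited basic Rellich lemma, presumably as a routine consequence of standard elliptic theory together with the material of Section~3. Your proposal is correct and is the natural way to fill the gap: apply global elliptic regularity for $L$ on the closed manifold $S$ to obtain $\phi \in H^{r+2}(S)$ with the stated estimate, and then observe that $\phi$, being weakly basic (it lies in $H^{r+1}_B$, hence annihilates every $\xi\varphi$), belongs to $H^{r+2}_B$ by the characterization proved in Section~3 (the lemma identifying $W^{r,2}_B$ with $\{f\in W^{r,2}: \int_S f(\xi\varphi)\,d\mu=0\}$, applied with $r+2$ in place of $r$). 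Your heat-flow regularization is exactly the mechanism of that earlier lemma, so you are in effect just invoking it; the alternative route via $[\xi,L]=0$ is also fine but not needed once that characterization is available.
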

It was observed in \cite{TristC} that for such an operator, Lemmas~\ref{basic Rellich} and~\ref{basic elliptic reg} yield the existence of a complete, orthonormal set $\{\phi_{i}, \lambda_{i} \}$ of eigenfunctions and eigenvalues for $-L$, spanning $L^{2}_{B}$, such that each $\phi_{i}$ is smooth and basic.  We can then define the basic heat kernel of the operator $L-\partial_{t}$ to be
\begin{equation}\label{basic heat kernel}
P^{B}(x,y,t) = \sum_{i} e^{-\lambda_{i}t}\phi_{i}(x)\phi_{i}(y).
\end{equation}
By the usual elliptic theory, the set $\{ \phi_{i}, \lambda_{i} \}$ can be completed into a complete set of eigenfunctions and eigenvlaues for the space $L^{2}$.  We may choose eigenfunctions $\{ \psi_{i}, \sigma_{i} \}$ so that $\{\phi_{i}, \lambda_{i} \} \cup \{ \psi_{i}, \sigma_{i} \}$ span $L^{2}$, and $\psi_{j} \perp \phi_{i}$ for each pair $i,j$ with respect to the $L^{2}$ inner-product.  The heat kernel for the parabolic operator $L-\partial_{t}$ is then given by
 \begin{equation}\label{heat kernel}
 P(x,y,t) = P^{B}(x,y,t) + P^{\perp}(x,y,t) = \sum_{i} e^{-\lambda_{i}t}\phi_{i}(x)\phi_{i}(y) + \sum_{i} e^{-\sigma_{i}t}\psi_{i}(x)\psi_{i}(y).
\end{equation}
We remark that the spectral representation in~(\ref{heat kernel}) is generally formal.  However, in the present case, the defining sum converges uniformly to a smooth function on $S \times S \times (0, \infty]$.  This follows from the following lemmas, whose proofs we only sketch.  

\begin{Lemma}\label{elliptic eigen est}
Suppose $L\phi = -\lambda \phi$, and $\| \phi \|_{L^{2}} =1$.  Then $\|\phi\|_{\infty} < C (|\lambda|+1)^{m_{n}}$ for constants $C$ and $m$ depending only on the dimension of $S$ and the Sobolev imbedding theorem.  Moreover, $\lambda \geq 0$, with equality if and only if $\phi \equiv 0$.
\end{Lemma}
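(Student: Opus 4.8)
The plan is a standard elliptic bootstrap carried out entirely within the basic Sobolev scale, so that Lemma~\ref{basic elliptic reg} applies at each step, capped off by a single application of the Sobolev embedding theorem; the assertion about the sign of $\lambda$ is a one-line integration by parts.

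First I would settle the sign. Multiplying $L\phi = -\lambda\phi$ by $\phi$ and integrating by parts over the closed manifold $S$ gives
\[
\lambda = \lambda\int_S \phi^2\, d\mu = -\int_S \phi\, L\phi\, d\mu = \int_S |\nabla\phi|^2\, d\mu + \int_S \Psi\phi^2\, d\mu \ge 0,
\]
using $\Psi \ge 0$. If $\lambda = 0$ then $\nabla\phi \equiv 0$, so $\phi$ is constant, and $\int_S\Psi\phi^2\, d\mu = 0$; since $\Psi$ is positive this forces $\phi\equiv 0$. In particular $|\lambda| = \lambda$ from now on.

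For the $L^\infty$ bound, recall that the eigenfunction $\phi$ produced via Lemmas~\ref{basic Rellich} and~\ref{basic elliptic reg} is already smooth and basic, so no regularization is needed and $\|L\phi\|_{r,B} = \lambda\|\phi\|_{r,B}$ for every $r \ge 0$. Applying Lemma~\ref{basic elliptic reg} and the interpolation inequality $\|\phi\|_{r+1,B} \le \varepsilon\|\phi\|_{r+2,B} + C_\varepsilon\|\phi\|_{r,B}$ (valid on a compact manifold) to absorb the lower-order term, one obtains
\[
\|\phi\|_{r+2,B} \le C_r\,(\lambda + 1)\,\|\phi\|_{r,B}.
\]
Starting from $\|\phi\|_{0,B} = \|\phi\|_{L^2} = 1$ and iterating $k$ times yields $\|\phi\|_{2k,B} \le C\,(\lambda+1)^{k}$, with $C$ depending only on $L$ and $k$. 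Fixing $k = k(n)$ with $2k > m/2$, the basic Sobolev embedding $H^{2k}_B(S)\hookrightarrow C^0(S)$ gives $\|\phi\|_\infty \le C\|\phi\|_{2k,B} \le C(\lambda+1)^k$, which is the claim with $m_n = k$.

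The steps are routine analytically; the only point that requires care in the Sasaki setting is that every inequality in the bootstrap must be read on the basic Sobolev spaces, since on an irregular Sasaki manifold there is no orbifold quotient and no basic mollifier to fall back on. This has, however, already been arranged: Lemma~\ref{basic elliptic reg} is exactly the basic elliptic regularity estimate and the $\phi_i$ are smooth and basic to begin with, so the main obstacle is reduced to the bookkeeping of the $\lambda$-dependence through each iteration and the choice of how many derivatives to gain before invoking the embedding.
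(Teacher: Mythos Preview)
Your approach---iterated $L^{2}$ elliptic regularity to climb the Sobolev scale, then a single Sobolev embedding for the $L^{\infty}$ bound, and integration by parts against $\phi$ for the sign of $\lambda$---is exactly the argument the paper sketches.

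One small correction: the lemma as stated applies to \emph{any} eigenfunction of $L$, not only basic ones, and in the lemma that follows it the bound is invoked for the non-basic eigenfunctions $\psi_{i}$ as well (to show that the full kernel $P$ and its orthogonal piece $P^{\perp}$ converge uniformly). Your bootstrap should therefore use the ordinary elliptic $W^{k,2}$ estimate on the compact Riemannian manifold $S$ rather than Lemma~\ref{basic elliptic reg}; there is no need to work in the basic scale here, and restricting to it leaves the $\psi_{i}$ uncovered. The standard elliptic estimate gives the same inequality $\|\phi\|_{r+2}\le C_{r}(\lambda+1)\|\phi\|_{r}$ after the identical interpolation-and-absorb step, so nothing else in your argument changes. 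Your closing paragraph about irregular Sasaki manifolds forcing the basic setting is thus misplaced for this particular lemma: the subtlety you describe is real elsewhere in the paper, but this estimate is purely Riemannian.
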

\begin{proof}
Apply the elliptic $W^{p,2}$ estimate inductively, until $p$ is sufficiently large to apply the Sobolev imbedding theorem.  The second statement follows from the positivity of $\Psi$, and integration by parts.
\end{proof}

\begin{Lemma}
The defining sum~(\ref{heat kernel}) for the heat kernel associated to the operator $L$ converges uniformly to a smooth function for each time $t>0$.  In particular, the basic heat kernel~(\ref{basic heat kernel}) is a smooth, basic function.
\end{Lemma}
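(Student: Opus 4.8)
The plan is to show that the series in~(\ref{heat kernel}), together with all of its derivatives in $x$, $y$ and $t$, converges absolutely and uniformly on $S\times S\times[t_{0},\infty)$ for every $t_{0}>0$. Since every partial sum is smooth, the limit will then be smooth on $S\times S\times(0,\infty)$; and the sub-series $P^{B}$ defining the basic heat kernel, being for fixed $y,t$ (resp.\ $x,t$) a $C^{1}$-uniform limit of finite linear combinations of the smooth basic functions $\phi_{i}$, will itself be smooth and basic. Two quantitative inputs are needed, and the whole argument amounts to combining them.

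First I would bound the $C^{k}$ norms of the eigenfunctions polynomially in their eigenvalues. Since $L^{j}\phi_{i}=(-\lambda_{i})^{j}\phi_{i}$, iterating the basic elliptic estimate of Lemma~\ref{basic elliptic reg} gives $\|\phi_{i}\|_{H^{2j}_{B}}\leq C_{j}(1+\lambda_{i})^{j}$ once $\|\phi_{i}\|_{L^{2}}=1$, and the same bound for the $\psi_{i}$ using the standard non-basic elliptic estimate. Choosing $j$ with $2j>k+m/2$ and invoking the Sobolev imbedding theorem, exactly as in the proof of Lemma~\ref{elliptic eigen est}, yields for each $k$ constants $C_{k}$ and exponents $N_{k}$ with $\|\phi_{i}\|_{C^{k}(S)}\leq C_{k}(1+\lambda_{i})^{N_{k}}$ and $\|\psi_{i}\|_{C^{k}(S)}\leq C_{k}(1+\sigma_{i})^{N_{k}}$.

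Second I would control the growth of the spectrum. Because $\Psi\geq 0$, the min–max principle gives $\lambda_{i}\geq\mu_{i}$ and $\sigma_{i}\geq\mu_{i}$, where $\mu_{0}\leq\mu_{1}\leq\cdots$ are the eigenvalues of $-\Delta$ on the compact manifold $S$; Weyl's law then gives $\#\{i:\lambda_{i}\leq\Lambda\}+\#\{i:\sigma_{i}\leq\Lambda\}\leq C(1+\Lambda)^{m/2}$, and in particular $\sum_{i}e^{-\lambda_{i}s}+\sum_{i}e^{-\sigma_{i}s}<\infty$ for every $s>0$. Now fix $t_{0}>0$ and integers $k,\ell$. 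Differentiating~(\ref{heat kernel}) term by term to total order $k$ in $(x,y)$ and order $\ell$ in $t$, the absolute value of the $i$-th term of $P^{B}$ is at most $C_{k}(1+\lambda_{i})^{2N_{k}+\ell}e^{-\lambda_{i}t}$ (and similarly for $P^{\perp}$ with $\sigma_{i}$); since $\sup_{\lambda\geq0}(1+\lambda)^{2N_{k}+\ell}e^{-\lambda t_{0}/2}<\infty$, for $t\geq t_{0}$ this is dominated by a constant multiple of $e^{-\lambda_{i}t_{0}/2}$, whose sum over $i$ converges and whose tail tends to zero. Hence the series for $P^{B}$ and $P^{\perp}$, and all their derivatives, converge uniformly on $S\times S\times[t_{0},\infty)$; termwise differentiation is thereby justified, and both conclusions of the lemma follow.

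The main obstacle is exactly the coupling of these two inputs — the polynomial eigenfunction bound is useless without at-most-polynomial eigenvalue growth, and vice versa. I expect the cleanest route to the eigenvalue-growth bound is the min–max comparison with $-\Delta$, reducing it to the classical Weyl law, rather than working directly with $L$; alternatively one could cite the Weyl asymptotics for the Schr\"odinger operator $-\Delta+\Psi$. A secondary point requiring a little care is that the identification of the limit of~(\ref{heat kernel}) with the heat kernel of $L-\partial_{t}$, and the legitimacy of termwise differentiation, should be deferred until the uniform estimates above are in hand; everything else is a routine adaptation of the classical spectral construction of the heat kernel on a compact manifold.
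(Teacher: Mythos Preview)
Your argument is correct and follows the same overall strategy as the paper's proof: polynomial $C^{k}$ bounds on eigenfunctions, combined with a polynomial bound on the spectral counting function, force uniform convergence of the series and all its term-by-term derivatives on $S\times S\times[t_{0},\infty)$. The technical inputs you choose, however, are somewhat different and more elementary. For the $C^{k}$ eigenfunction bounds, the paper first records the $C^{0}$ estimate of Lemma~\ref{elliptic eigen est} and then invokes Bernstein's principle to upgrade it to a $C^{k}$ bound; you instead iterate the elliptic estimate $L^{j}\phi_{i}=(-\lambda_{i})^{j}\phi_{i}$ directly and apply the Sobolev imbedding at level $2j>k+m/2$, which gives the same polynomial dependence without appealing to Bernstein. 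For the eigenvalue growth, the paper cites H\"ormander's estimate for the spectral counting function of the elliptic operator $L$ itself, while you use the min--max comparison $-L=-\Delta+\Psi\geq-\Delta$ to reduce to the classical Weyl law for the Laplacian on $S$. Your route is more self-contained and avoids both of the paper's black boxes, at the price of a cruder (but entirely adequate) counting bound; the paper's route is shorter to state but leans on heavier external results.
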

\begin{proof}
Fix a time $T>0$.  We apply the bound in Lemma~\ref{elliptic eigen est}, and H\"ormander's estimate \cite{LarsH} for the spectral counting function to show that the spectral representations for $P, P^{B}$ and $P^{\perp}$ converge uniformly for every $t>T$ to a continuous function.  We now employ Bernstein's principle to extend the $C^{0}$ bound, $\|\phi\|_{\infty} < C (|\lambda|+1)^{m_{n}}$ in Lemma~\ref{elliptic eigen est}, to a $C^{k}$ bound for $\| \phi \|_{C^{k}} \leq C h(|\lambda|)$ for a constant $C$ independent of $\lambda$, and a polynomial $h$ of order $m_{k}$ independent of $\lambda$.  Again, H\"ormander's estimate  for the spectral counting function yields convergence.
\end{proof}  

From the defining formula~(\ref{heat kernel}), we see that, for any $f \in L^{2}_{B}$ we have $\int_{S}P^{\perp}(x,y,t)f(y)d\mu(y) = 0$.  That is, the restriction of $P$ to $L^{2}_{B}$ is $P^{B}$. One obtains still more information about the integral kernel $P^{B}$ by applying the maximum principle to the equation $L-\partial_{t} =0$. 
\begin{Lemma}\label{pos contract}
If $\phi$ solves $L\phi-\partial_{t}\phi=0$, with $\phi(x,t)=\phi(x,0)$ smooth, then $\max_{x}\phi^{+}(x,t)$, and $\max_{x}\phi^{-}(x,t)$ are strictly decreasing.  If $\phi(0)>0$, then $\phi(x,t) >0$ for all $t$.  
\end{Lemma}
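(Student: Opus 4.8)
The plan is to run the parabolic maximum principle by comparing $\phi$ with explicit, spatially constant barriers. Write the equation as $\partial_{t}\phi = \Delta\phi - \Psi\phi$ on the closed manifold $S$, and put $c := \min_{S}\Psi$ and $C := \max_{S}\Psi$. Since $\Psi$ is a positive continuous function on a compact manifold, $0 < c \le C < \infty$; the strict inequality $c>0$ is precisely what forces the \emph{strict} decrease, and it is the only place where positivity of $\Psi$ (rather than mere nonnegativity) enters. Because the equation is linear, $-\phi$ also solves it, so the assertion about $\max_{x}\phi^{-}(x,t)$ follows from that about $\max_{x}\phi^{+}(x,t)$ by replacing $\phi$ with $-\phi$; I will therefore only treat the positive part and the final positivity statement.

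For the positive part, fix $t_{0}\ge 0$ and suppose $A := \max_{x}\phi(x,t_{0}) > 0$. Let $\overline{\phi}(x,t) := A\,e^{-c(t-t_{0})}$. A direct computation gives $(\partial_{t}-\Delta+\Psi)\overline{\phi} = (\Psi-c)A\,e^{-c(t-t_{0})} \ge 0$, so $\overline{\phi}$ is a supersolution of $\partial_{t} = \Delta-\Psi$ with $\overline{\phi}(\cdot,t_{0}) = A \ge \phi(\cdot,t_{0})$. Applying the weak parabolic maximum principle on the compact manifold $S$ to the difference $\overline{\phi}-\phi$ (which satisfies a scalar linear parabolic inequality and is nonnegative at $t_{0}$) yields $\phi(x,t)\le A\,e^{-c(t-t_{0})}$ for all $t\ge t_{0}$, hence $\max_{x}\phi(x,t) < A$ for $t>t_{0}$. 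This shows $t\mapsto \max_{x}\phi^{+}(x,t)$ is strictly decreasing on any interval where it is positive. If instead $\phi(\cdot,t_{1})\le 0$ for some $t_{1}$, then the subsolution comparison described below, applied to $-\phi$ with initial time $t_{1}$, gives $-\phi(\cdot,t)\ge 0$ for $t\ge t_{1}$, i.e. $\max_{x}\phi^{+}(\cdot,t)\equiv 0$ thereafter, so the quantity is globally non-increasing.

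For the last claim, assume $\phi(\cdot,0) > 0$ and set $a := \min_{x}\phi(x,0) > 0$. With $\underline{\phi}(x,t) := a\,e^{-Ct}$ one computes $(\partial_{t}-\Delta+\Psi)\underline{\phi} = (\Psi-C)a\,e^{-Ct}\le 0$, so $\underline{\phi}$ is a subsolution with $\underline{\phi}(\cdot,0) = a \le \phi(\cdot,0)$; the comparison principle then gives $\phi(x,t)\ge a\,e^{-Ct} > 0$ for all $t$ (the same computation with $a$ replaced by $\min_{x}\phi(\cdot,t_{1})\ge 0$ gives the statement used in the previous paragraph). The whole argument is standard parabolic theory and I do not expect a genuine obstacle; the only points deserving care are the bookkeeping of the degenerate case in which $\phi^{+}$ (or $\phi^{-}$) vanishes, and the observation that compactness of $S$ supplies the uniform lower bound $c>0$. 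An equivalent route avoiding barriers is Hamilton's trick: since $\phi$ is smooth in space-time, $M(t):=\max_{x}\phi(x,t)$ is locally Lipschitz and, at a spatial maximizer $x_{t}$, for a.e. $t$ with $M(t)>0$ one has $M'(t) = \Delta\phi(x_{t},t) - \Psi(x_{t})\phi(x_{t},t) \le -c\,M(t)$; Gr\"onwall's inequality gives $M(t)\le M(t_{0})e^{-c(t-t_{0})}$, and the analogous lower estimate for $\min_{x}\phi$ gives positivity.
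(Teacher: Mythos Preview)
Your argument is correct and is precisely the maximum-principle computation the paper has in mind: the paper states this lemma without proof, merely prefacing it with the sentence ``One obtains still more information\ldots by applying the maximum principle to the equation $L-\partial_{t}=0$.'' Your explicit barriers $A e^{-c(t-t_{0})}$ and $a e^{-Ct}$ (or equivalently the Hamilton-trick variant you sketch) are the standard way to make that remark precise, and your handling of the degenerate case $\phi^{+}\equiv 0$ correctly identifies the only place where ``strictly decreasing'' has to be read as ``non-increasing.''
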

By the density of $C^{\infty}_{B}$ in the basic Sobolev and Lebesgue spaces, we see that the integral kernel $P^{B}(x,y,t)$ is a positivity-preserving contraction.  Now, since the function $f \equiv 1$ is trivially basic, we have $1 = \int_{S} P(x,y,0)d\mu(y) = \int_{S} P^{B}(x,y,0)d\mu(y)$.  Thus, for any $t>0$,
\begin{equation*}
\frac{\partial}{\partial t} \int_{S} P(x,y,t) d\mu(y) = \frac{\partial}{\partial t} \int_{S} P^{B}(x,y,t) d\mu(y) = -\int_{S}P^{B}(x,y,t)\Psi(y)d\mu(y) <0
\end{equation*}
Where the last inequality uses that $\Psi>0$, and $P^{B}$ is positivity preserving.  We thus see $\int_{S}P^{B}(x,y,t)d\mu(y)\leq 1$.  


\section{Proof of Theorem~\ref{main theorem}}
Throughout this section we will assume, without loss of generality, that $\kappa = 1$, so that the transverse Ricci flow exists on $[0,1)$.
\subsection{Restricted log Sobolev Inequality}
We begin by proving a restricted log Sobolev inequality, which will allow us to obtain a uniform upper bound for $P^{B}$.  

\begin{prop}\label{restricted log Sob}
Let $g(t)$ be a solution of the Sasaki-Ricci flow defined on $[0,\infty)$, with $g(0)$ Sasaki.  For any $0 < \epsilon \leq 2$, $t\in [0,\infty)$ and any $v\in W^{1,2}_{B}$ with $\|v\|_{L^{2}(g(t))} =1$, we have
\begin{equation*}
\int_{S} v^{2}\log v^{2} d\mu_{t} \leq \epsilon^{2}\int_{S}\left(|\nabla v|^{2} + \frac{1}{4}R^{T}v^{2}\right)d\mu_{t} -(2n+1)\log \epsilon + C_{1} + \max_{x\in S} R_{-}^{T}(x,0)
\end{equation*}
for a constant $C_{1}$ depending only on $n$ and $g(0)$.
\end{prop}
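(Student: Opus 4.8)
The plan is to test Perelman's transverse entropy functional against the function naturally attached to $v$ and then feed in the monotonicity of $\mu^{T}$. By the definition of $W^{1,2}_{B}$ as the $W^{1,2}$-closure of $C^{\infty}_{B}$, the heat-flow regularization of \S\ref{basic sob spaces}, the substitution $v\mapsto |v|$ (which changes neither side, since $|\nabla |v||=|\nabla v|$ a.e.), and the standard device of replacing $v^{2}$ by $v^{2}+\delta$ and letting $\delta\to0$, it suffices to prove the inequality for a strictly positive $v\in C^{\infty}_{B}(S)$ with $\int_{S}v^{2}\,d\mu_{t}=1$; the only point to watch in these limits is that $\int v^{2}\log v^{2}$ pass to the limit, which is ensured by a fixed-time $L^{p}$ bound, $p>2$, on the approximants coming from the ordinary Sobolev inequality on $(S,g_{0})$. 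For such a $v$, set $\tau=\epsilon^{2}/4\in(0,1]$ and define $f\in C^{\infty}_{B}$ by $e^{-f}=(4\pi\tau)^{n}v^{2}$. Then $f$ is admissible in the variational problem defining $\mu^{T}(g(t),\tau)$, so $\mu^{T}(g(t),\tau)\le \mathcal{W}^{T}(g(t),f,\tau)$, and using $|\nabla f|^{2}=4|\nabla v|^{2}/v^{2}$ together with $\int_{S}v^{2}\,d\mu_{t}=1$ a direct computation rearranges this to
\[
\int_{S}v^{2}\log v^{2}\,d\mu_{t}\le \epsilon^{2}\int_{S}\Big(|\nabla v|^{2}+\tfrac14 R^{T}v^{2}\Big)d\mu_{t}-2n\log\epsilon-n\log\pi-2n-\mu^{T}(g(t),\tau).
\]
Hence the proposition reduces to the uniform lower bound $\mu^{T}(g(t),\tau)\ge \log\epsilon-\max_{x}R_{-}^{T}(x,0)-C$ with $C=C(n,g_{0})$.

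Since $\kappa=1$, rescaling the transverse metric and dilating time turns $g(t)$, $t\in[0,\infty)$, into a transverse Ricci flow $\tilde g(s)$, $s\in[0,1)$, with $\tilde g(0)=g_{0}$ and $\tilde g^{T}(s)=e^{-t}g^{T}(t)$ at $s=1-e^{-t}$. I expect the main obstacle to be pinning down the behaviour of $\mu^{T}$ under a transverse homothety $g^{T}\mapsto\lambda g^{T}$: because the volume form $d\mu=\eta\wedge(d\eta)^{n}$ carries one power of the contact form beyond the $n$ powers of $d\eta$ in the normalizing factor $(4\pi\tau)^{-n}$, one gets not scale-invariance but $\mu^{T}(\lambda g^{T},\lambda\tau)=\mu^{T}(g^{T},\tau)+\log\lambda$, which must be checked honestly. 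Applying this with $\lambda=e^{-t}$ and then Proposition~\ref{mu monotone} to $\tilde g(s)$ gives
\[
\mu^{T}(g(t),\tau)=\mu^{T}(\tilde g(s),e^{-t}\tau)+t\ge \mu^{T}\big(g_{0},\,(1-e^{-t})+e^{-t}\tau\big)+t\ge \mu^{T}(g_{0},\theta),
\]
where $\theta:=1-e^{-t}(1-\tau)\in[\tau,1)=[\epsilon^{2}/4,1)$ since $0<\tau\le1$ and $t\ge0$.

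It remains to bound $\mu^{T}(g_{0},\theta)$ from below for $\theta\in[\epsilon^{2}/4,1)$. Using $\theta R^{T}\ge-\theta\max_{x}R_{-}^{T}(x,0)\ge-\max_{x}R_{-}^{T}(x,0)$ (this is where $\theta<1$ enters) pulls the scalar curvature of $g_{0}$ out of the functional: $\mu^{T}(g_{0},\theta)\ge-\max_{x}R_{-}^{T}(x,0)+\widetilde\mu^{T}(g_{0},\theta)$, with $\widetilde\mu^{T}$ the same functional with the $R^{T}$ term deleted. Rewriting $\widetilde\mu^{T}(g_{0},\theta)$ via $e^{-f}=(4\pi\theta)^{n}w^{2}$ and feeding in the \emph{classical} logarithmic Sobolev inequality on the fixed closed manifold $(S,g_{0})$ — in its sharp small-parameter form $\int w^{2}\log w^{2}\,d\mu_{0}\le\sigma\int|\nabla w|^{2}\,d\mu_{0}-\tfrac m2\log\sigma+c_{0}$ for $\|w\|_{L^{2}(g_{0})}=1$, $0<\sigma\le1$, $m=2n+1$ (a standard consequence of the ordinary Sobolev inequality on $(S,g_{0})$) — with $\sigma=4\theta$ yields $\widetilde\mu^{T}(g_{0},\theta)\ge(\tfrac m2-n)\log(4\theta)-C_{1}=\tfrac12\log(4\theta)-C_{1}\ge\log\epsilon-C_{1}$, the last step because $4\theta\ge\epsilon^{2}$. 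Substituting back, the term $-2n\log\epsilon$ coming from $-n\log(4\pi\tau)$ combines with the $-\log\epsilon$ coming from the bound on $\widetilde\mu^{T}$ to give $-(2n+1)\log\epsilon$, while the remaining constants depend only on $n$ and $g_{0}$; this is exactly the asserted inequality. Apart from the monotonicity of $\mu^{T}$ and the homothety identity above, the argument follows the Kähler case of \cite{QZhang}, \cite{RugYe}.
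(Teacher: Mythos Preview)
Your argument is correct and is essentially the paper's own proof, reorganized. The paper carries out exactly the same two steps: (i) rewrite the $\mathcal W^{T}$ functional at $(g(t),\tau)$ in terms of $v$ via $e^{-f}=(4\pi\tau)^{n}v^{2}$, then invoke the monotonicity of $\mu^{T}$ along the unnormalized transverse Ricci flow to replace $\mu^{T}(g(t),\tau)$ by $\mu^{T}(g_{0},\theta)$ with $\theta\in[\epsilon^{2}/4,1)$; (ii) bound $\mu^{T}(g_{0},\theta)$ from below by discarding $\theta R^{T}\ge-\max R^{T}_{-}$ and applying the fixed-metric log Sobolev inequality of Proposition~\ref{input log Sob}, which produces $\tfrac12\log(4\theta)\ge\log\epsilon$ via the arithmetic $\tfrac{m}{2}-n=\tfrac12$. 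The only cosmetic difference is that the paper starts with $\tau=\epsilon^{2}$ and rescales $\epsilon$ at the very end, whereas you set $\tau=\epsilon^{2}/4$ at the outset; and the paper writes the homothety step as an equality $\mu^{T}(g,\epsilon^{2})=\mu^{T}(\lambda g,\lambda\epsilon^{2})$ rather than your $\mu^{T}(\lambda g^{T},\lambda\tau)=\mu^{T}(g^{T},\tau)+\log\lambda$. Your version is the more careful one (since $d\mu=\eta\wedge(d\eta)^{n}$ picks up $\lambda^{n+1}$ under $\eta\mapsto\lambda\eta$), but either way one obtains $\mu^{T}(g(t),\tau)\ge\mu^{T}(g_{0},\theta)$, so this does not affect the outcome.

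One small slip to fix: you invoke the sharp log Sobolev inequality ``for $0<\sigma\le1$'' and then take $\sigma=4\theta$, but $\theta$ ranges over $[\epsilon^{2}/4,1)$, so $\sigma=4\theta$ can be as large as (just under) $4$. This is harmless---Proposition~\ref{input log Sob} is stated for $\lambda\in(0,2]$, i.e.\ $\sigma=\lambda^{2}\in(0,4]$, which is exactly the range you need---but you should quote that range rather than $\sigma\le1$.
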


The proof of this theorem relies on the monotonicity of $\mu^{T}$ along the transverse Ricci flow on a Sasaki manifold, and on previously known log Sobolev inequalities.  Before beginning the proof of Proposition~\ref{restricted log Sob} we state the precise version of the log Sobolev inequality we will need, which is well known; see, for instance, \cite{RugYe} Theorem 3.3 for a proof.

\begin{prop}\label{input log Sob}
Let $(M^{n}, g)$ be a Riemannian manifold, and $\lambda \in (0, 2]$.  For any $v \in W^{1,2}(M)$ with $\|v\|_{2} =1$, there exists a positive constant $C_{0}$ depending only on $g$ such that
\begin{equation*}
\int_{S}v^{2}\log v^{2} d\mu \leq \lambda^{2} \int_{S} |\nabla v|^{2} d\mu-n\log \lambda + C_{0}.
\end{equation*}
\end{prop}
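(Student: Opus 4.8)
The plan is to deduce this fixed-metric, scale-parametrized log-Sobolev inequality from the ordinary \emph{critical} Sobolev inequality on the compact manifold $(M,g)$, combined with Jensen's inequality and a one-parameter optimization that manufactures the precise factor $-n\log\lambda$. Throughout I may assume $M$ is compact (as it is in every application in this paper) and, after replacing $v$ by $|v|$, that $v\geq 0$; since the integrand $v^2\log v^2$ vanishes on $\{v=0\}$ there is no loss in working on $\{v>0\}$, where $v^2\,d\mu$ is a genuine probability measure because $\|v\|_{2}=1$. I will treat the principal case $n\geq 3$ and indicate at the end how $n\leq 2$ reduces to it.

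First I would record the Sobolev inequality on the compact manifold: there exist constants $A,B>0$, depending only on $g$, with $\|v\|_{p}^{2} \leq A\|\nabla v\|_{2}^{2} + B\|v\|_{2}^{2}$, where $p = 2n/(n-2)$ is the critical exponent. The second ingredient is a Jensen reduction. Writing $\log v^{2} = \tfrac{2}{p-2}\log v^{p-2}$ and applying Jensen's inequality to the concave function $\log$ against the probability measure $v^{2}\,d\mu$ gives $\int_{M} v^{2}\log v^{2}\,d\mu \leq \tfrac{2}{p-2}\log\!\int_{M} v^{p}\,d\mu = \tfrac{p}{p-2}\log\|v\|_{p}^{2}$. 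The key arithmetic is that for the critical exponent one has $\tfrac{p}{p-2} = \tfrac{n}{2}$ exactly; combining this with the Sobolev inequality and $\|v\|_{2}=1$ yields $\int_{M} v^{2}\log v^{2}\,d\mu \leq \tfrac{n}{2}\log\!\big(A\|\nabla v\|_{2}^{2} + B\big)$.

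The final step linearizes the logarithm and optimizes. Using the elementary bound $\log y \leq \alpha y - 1 - \log\alpha$, valid for all $y,\alpha>0$ (it is just $\log z\leq z-1$ with $z=\alpha y$), applied to $y = A\|\nabla v\|_{2}^{2} + B$, I obtain for every $\alpha>0$ the estimate $\int_{M} v^{2}\log v^{2}\,d\mu \leq \tfrac{n\alpha A}{2}\|\nabla v\|_{2}^{2} + \tfrac{n}{2}\big(\alpha B - 1 - \log\alpha\big)$. Choosing $\alpha = 2\lambda^{2}/(nA)$ forces the gradient coefficient to equal $\lambda^{2}$, and a direct computation shows the remaining constant equals $-n\log\lambda + \tfrac{\lambda^{2} B}{A} - \tfrac{n}{2} - \tfrac{n}{2}\log\tfrac{2}{nA}$. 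The only $\lambda$-dependent pieces are the desired $-n\log\lambda$ and the term $\tfrac{\lambda^{2}B}{A}$, which the hypothesis $\lambda\in(0,2]$ bounds by $4B/A$; absorbing this and the remaining constants into a single $C_{0} = 4B/A - \tfrac{n}{2} - \tfrac{n}{2}\log\tfrac{2}{nA}$, depending only on $g$ and $n$, completes the estimate.

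I expect the only genuinely delicate point to be the bookkeeping that isolates the coefficient $-n\log\lambda$: this term appears with exactly the stated coefficient because the critical exponent makes $\tfrac{p}{p-2}=\tfrac{n}{2}$, so any non-critical Sobolev embedding would produce a different multiple of $\log\lambda$, and it is precisely the restriction $\lambda\leq 2$ that keeps the residual constant uniform. For $n\leq 2$, where the critical exponent is unavailable, I would instead fix any exponent $p>2$ for which the compact Sobolev embedding $W^{1,2}\hookrightarrow L^{p}$ holds and repeat the argument verbatim; the same scaling optimization then yields an inequality of the stated shape, which suffices since the manifolds arising in this paper have dimension at least three. The remaining ingredients---Jensen's inequality, the linearization $\log y\leq \alpha y - 1 - \log\alpha$, and the compact Sobolev inequality---are entirely standard, matching the treatment in \cite{RugYe}.
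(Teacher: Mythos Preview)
Your argument is correct and is precisely the standard derivation: critical Sobolev inequality, Jensen applied to the probability measure $v^{2}\,d\mu$, and the linearization $\log y\le \alpha y-1-\log\alpha$ with $\alpha$ chosen to match the gradient coefficient. The paper does not supply its own proof of this proposition; it simply cites \cite{RugYe}, Theorem~3.3, whose argument is the one you have reproduced.

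One small caveat: your remark about $n\le 2$ is not quite accurate. Running the same computation with a subcritical exponent $p$ produces a coefficient $\tfrac{2p}{p-2}>2\ge n$ in front of $-\log\lambda$, and since $-\log\lambda\to+\infty$ as $\lambda\to 0$ this cannot be absorbed into a $\lambda$-independent $C_{0}$. The inequality with the exact coefficient $n$ in low dimensions requires a different input (for $n=2$ one typically goes through a Moser--Trudinger or Nash-type bound rather than a fixed $L^{p}$ embedding). As you note, however, this is irrelevant here: the proposition is only applied to the Sasaki manifold $S$, whose real dimension $2n+1$ is at least three, so the critical-exponent argument suffices.
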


\begin{proof}[Proof of Proposition~\ref{restricted log Sob}]
Fix $\epsilon \in (0,2]$.  Recall that if $g^{T}(s)$ is a solution of the Sasaki-Ricci flow on $[0,\infty)$, then $\tilde{g}^{T}(t)= (1-t)g^{T}(-\log(1-t))$ is a solution of the transverse Ricci flow on $[0,1)$.  Fix a time $s\in[0, \infty)$, and define $t$ by $s=-\log(1-t)$.  By Proposition~\ref{mu monotone}, we have (upon suppressing the superscript $T$)
\begin{equation}\label{mu monotone eqn}
\mu(g(s),\epsilon^{2}) = \mu(g(-\log(1-t)), \epsilon^{2}) = \mu(\tilde{g}(t),(1-t)\epsilon^{2}) \geq \mu(g(0), (1-t)\epsilon^{2}+t).
\end{equation}

Let $\sigma(t) = t+ (1-t)\epsilon^{2}$, and observe that $\sigma(t) \in [\epsilon^{2}, 1]$;  combining this with equation~(\ref{mu monotone eqn}), and Proposition~\ref{input log Sob} we obtain
\begin{equation*}
\begin{aligned}
&\inf_{\left\{v \in W^{1,2}_{B} : \|v\|_{L^{2}(g(s))} =1\right\}} \int_{S} \epsilon^{2}\left(R^{T}_{g(t)}v^{2} +4|\nabla v|_{g(t)}^{2}\right) -v^{2} \log v^{2} d\mu_{t} -n\log(\epsilon^{2})  \geq \\ 
&\inf_{\left\{v_{0} \in W^{1,2}_{B} : \|v_{0}\|_{L^{2}(g(0))} =1\right\}}\int_{S}\sigma(t) \left( R^{T}_{g(0)}v_{0}^{2} +4|\nabla v_{0}|_{g(0)}\right) -v_{0}^{2} \log v_{0}^{2} d\mu_{0}-n\log( \sigma(t)) \geq\\ 
&-\sup_{x\in S} R^{T}_{-}(x,0) +\frac{1}{2} \log(\sigma(t))-C_{1}.
\end{aligned}
\end{equation*}
where $R^{T} = R^{T}_{+} - R^{T}_{-}$, and we have used $\lambda = \sqrt{\sigma(t)}$ in our application of Proposition~\ref{input log Sob}.  Note that $C_{1}$ depends only on $g(0)$ and $n$.  Rearranging this equation, and using that $\sigma(t) \in[\epsilon^{2}, 1]$ yields,
\begin{equation*}
\int_{S} v^{2}\log v^{2} d\mu_{t} \leq \epsilon^{2}\int_{S}\left(4|\nabla v|^{2} + R^{T}v^{2}\right)d\mu_{t} -(2n+1)\log \epsilon + C_{1} + \max_{x\in S} R_{-}^{T}(x,0).
\end{equation*}
Redefining $\epsilon$, we are done.
\end{proof}
\subsection{Uniform Upper Bound for the Basic Heat Kernel}
Fix a time $t_{0}$ in the Sasaki-Ricci flow.  We now use the Proposition~\ref{restricted log Sob} to obtain a uniform upper bound for the operator
\begin{equation}\label{Theta heat equation}
\Delta_{g(t_{0})} -\frac{1}{4}R^{T}(x,t_{0}) -\Theta -1,
\end{equation}
where $\Theta=\sup_{x \in S} R^{T}_{-}(x,0)$.  It is standard to compute that $ \sup R^{T}_{-}(x,t)$ is increasing under the flow, and hence $\frac{1}{4}R^{T}(x,t) +\Theta >0 $ for all $t>0$.  Let $P_{\Theta}$ be the heat kernel of the operator~(\ref{Theta heat equation}).  By the discussion in \S\ref{basic heat kernel section} we may decompose this operator as $P_{\Theta} = P_{\Theta}^{B} + P_{\Theta}^{\perp}$.  Let $u(x,t)$ be a positive, basic solution of the equation
\begin{equation*}\label{curvature heat equation}
\Delta_{g(t_{0})}u(x,t) -(\frac{1}{4}R^{T}(x,t_{0}) +\Theta +1)u(x,t) = 0.
\end{equation*}
Note that any solution to this equation with smooth, positive, basic initial data will be positive and basic for all time by Lemma~\ref{pos contract}.  For simplicity, we now suppress the argument $t_{0}$, with the understanding that it is fixed, and all computations will be done with respect to the \emph{fixed} metric $g(t_{0})$;  we will denote by $\Psi(x)$ .
We now follow the computation in \cite{QZhang}, which carries over verbatim for the basic function $u(x,t)$.  For completeness, and as our operator is a slight modification of the operator in \cite{QZhang} we include a few steps in the computation.  Choose $T \in (0,1]$ and set $p(t) = T/(T-t)$, so that $p(0)=1$, $p(T)=\infty$.  For convenience we define the function $v(x,t) = u^{p/2}/\|u^{p/2}\|_{L^{2}}$, which has the property that $v \in W^{1,2}_{B}(dx)$ and $\|v\|_{L^{2}(dx)} =1$.  One then computes
\begin{equation}\label{norm derivative}
\begin{aligned}
&p^{2}(t)\partial_{t}\log \|u\|_{p(t)}\\ &= p'(t) \int_{S}v^{2}\log v^{2}dx - (4p-1)\int_{S}|\nabla v|^{2} dx-p^{2}\int_{S}\Psi(x) v^{2}dx \\
&= p'\left\{\int_{S}v^{2}\log v^{2} -\frac{4(p-1)}{p'}\left(\int_{S} |\nabla v|^{2} +\Psi(x) v^{2} dx \right)\right\} + \left((4p-1)-p^{2}\right)\int_{S}\Psi v^{2}dx.
\end{aligned}
\end{equation}
It is easy to check that $4(p-1)/p' \leq T\leq 1$, and $-T \leq \left(4(p-1)-p^{2}\right)/p' \leq 0$.  Since $\Psi>0$, the final terms in~(\ref{norm derivative}) is negative.  We apply Proposition~\ref{restricted log Sob} with $\epsilon = 4(p-1)/p' $  to obtain
\begin{equation*}
\partial_{t}\log \|u\|_{p(t)} \leq \frac{1}{T}\left( -\frac{2n+1}{2}\log\left( \frac{4t(T-t)}{T}\right) +C_{1} \right)
\end{equation*}
Integrating in $t$ from $0$ to $T$ and rearranging yields that for every smooth function $u \in C^{\infty}_{B}$, and every $0<T \leq 1$ we have
\begin{equation*}
|u(x,T)| = \left|\int_{S} P_{\Theta}(x,y,T)u(y)dy\right| =\left| \int_{S} P_{\Theta}^{B}(x,y,T) u(y) dy \right|\leq T^{-(2n+1)/2}\Lambda \|u \|_{L^{1}(dx)}.
\end{equation*}
Since $P^{B}_{\Theta}(x,y,t)$ is basic, it follows that for every $T \in (0,1]$ we have the bound
\begin{equation}\label{short time upper bound}
P^{B}_{\Theta}(x,y,T) \leq CT^{-(2n+1)/2}.
\end{equation}
where $C$ is a constant that depends only on $g(0)$ and $n$.

\subsection{Uniform Sobolev Inequality}
We begin by showing that the short time upper bound~(\ref{short time upper bound}) extends to a bound for every positive time $t$.  Fix $y \in S$ and consider the function $f(x,t) = P_{\Theta}^{B}(x,y,1+t)$, which is a smooth, basic function.  Equation~(\ref{short time upper bound}) shows the $f(x,0) \leq C$, and we have
\begin{equation*}
\frac{\partial}{\partial t} f(x,t) = \Delta f(x,t) - \Psi(x) f(x,t),
\end{equation*}
where $\Psi \geq 1$.  We then compute $ \partial_{t} \left(e^{t}f(x,t)\right)$ and apply the maximum principle to see that $f(x,t) \leq e^{-t}C$, whence 
\begin{equation}\label{lt upper bound}
P^{B}_{\Theta}(x,y,t) \leq CT^{-(2n+1)/2}, \text{  }\text{  }\text{ for every } t > 0.
\end{equation}
By the discussion following Lemma~\ref{pos contract}, $P_{\Theta}$ is a contraction on $L^{1}$, and so $P_{\Theta}^{B}$ is a contraction on $L^{1}_{B}$.  H\"older's inequality, combined with the long time upper bound~(\ref{lt upper bound}) yields that, for any $f \in L^{2}_{B}$, we have
\begin{equation}\label{semi group bound}
\left| \int_{S} P_{\Theta}^{B}(x,y,t)f(y)d\mu(y) \right| \leq \Lambda^{1/2} t^{-(2n+1)/4} \|f\|_{2}.
\end{equation}
In the Riemannian case, passing from a bound of the type~(\ref{semi group bound}) to the Sobolev inequality is by now a standard argument; we refer the reader to \cite{Davies}, \cite{RugYe}.  In our case, the argument is similar, with the added complication that the bound~(\ref{semi group bound}) only holds on $L^{2}_{B}$, which is a closed subspace of $L^{2}$.  However, since the operator defined by the integral kernel preserves the basic spaces, we can restrict our attention to the \emph{basic} Lebesgue and Sobolev spaces.  The ingredients in the proof which don't immediately carry over are none other than the Riesz-Thorin interpolation theorem and the Marcinkiewicz interpolation theorem, which we established in \S\ref{basic sob spaces}.  With these considerations, we do not feel it is irresponsible to omit the details.  We thus obtain the existence of constants $A, B > 0$ depending only on $n$ and $g_{0}$ such that, for each time $t$ in the Sasaki-Ricci flow, we have
\begin{equation*}
\left(\int_{S} v^{2m/(m-2)}d\mu_{t}\right)^{(m-2)/m} \leq A \int_{S}\left( |\nabla v|^{2} + \frac{1}{4}R^{T}v^{2}\right)d\mu_{t} + B\int_{S} v^{2}d\mu_{t}.
\end{equation*}
By applying Theorem~\ref{Pman thm} to bound $R^{T}$ above uniformly, we establish Theorem~\ref{main theorem}.


\begin{thebibliography}{9}



\bibitem{BoyGal1} C.P. Boyer and K. Galicki, \emph{Sasakian Geometry}, Oxford University Press, Oxford, 2008.

 \bibitem{BoyGalSim} C.P. Boyer, K. Galicki and S. R. Simanca, \emph{Canonical Sasakian metrics}, Comm. Math. Phys., {\bf 279} (2008), no. 3, 705-733.

 \bibitem{Cao}H.-D. Cao, \emph{Deformation of K\"ahler metrics to K\"ahler-Einstein metrics on compact K\"ahler manifolds}, Invent. Math. {\bf 81} (1985), no.2, 359-372.

 \bibitem{CheeTian} J. Cheeger and G. Tian, \emph{On the cone structure at infinity of Ricci flat manifolds with Euclidean volume growth and quadratic curvature decay}, Invent. Math., {\bf 118} (1994), no. 3, 494-571.

\bibitem{TristC} T. Collins, \emph{The Transverse Entropy Functional and the Sasaki-Ricci flow}, preprint,  arXiv:1103.5720. 

\bibitem{Davies} E.B. Davies, \emph{Heat Kernels and Spectral Theory}, Cambridge University Press, Cambridge, 1989.

\bibitem{Folland} G.B. Folland, \emph{ Real Analysis: Modern Techniques and Their Applications}, Wiley, 1999.

 \bibitem{Futaki} A. Futaki, H. Ono and G. Wang, \emph{Transverse K\"ahler geometry of Sasaki manifolds and toric Sasaki-Einstein manifolds}, J. Differential Geom., {\bf 83} (2009), no.3, 585-636.

 \bibitem{GMarSparW} J. Gauntlett, D. Martelli, J. Sparks and W. Waldram, \emph{Sasaki-Einstein metrics on $S(2) \times S(3)$}, Adv. Theor. Math. Phys., {\bf 8} (2004), 711-734.
 
 \bibitem{GMarSparW1} J. Gauntlett, D. Martelli, J. Sparks and W. Waldram, \emph{A new infinite class of Sasaki-Einstein manifolds}, Adv. Theor. Math. Phys., {\bf 8} (2004), 711-734

 \bibitem{GMarSparYau} J. Gauntlett, D. Martelli, J. Sparks and S.-T Yau, \emph{Obtructions to the existence of Sasaki-Einstein metrics}, Comm. Math. Phy., {\bf 273} (2007), 803-827.

\bibitem{WHe} W. He, \emph{The Sasaki-Ricci flow and compact Sasakian manifolds of positive transverse holomorphic bisectional curvature}, preprint, arXiv:1103.5807.

\bibitem{LarsH} L. H\"ormander, \emph{The spectral functions of an elliptic operator}, Acta Math. {\bf 121} (1968), 193-218.

\bibitem{Hsu} S.-Y. Hsu, \emph{Uniform Sobolev inequalities for manifolds evolving by Ricci flow}, preprint, arXiv:0708.0893

\bibitem{KamTond} F. W. Kamber, and P. Tondeur, \emph{De Rham-Hodge theory for Riemannian foliations}, Math. Ann. {\bf 277} (1987), 415-431.

\bibitem{MarSpar} D. Martelli, and J. Sparks, \emph{Toric Sasaki-Einstein metrics on $S(2) \times S(3)$}, Phys. Lett. B, {\bf 621} (2005), 208-212.

\bibitem{MarSpar1} D. Martelli, and J. Sparks, \emph{ Toric geometry, Sasaki-Einstein manifolds and a new infinite class of AdS/CFT duals}, Comm. Math. Phys., {\bf 262} (2006), 51-89.

\bibitem{MarSparYau} D. Martelli, J. Sparks and S.-T. Yau, \emph{Sasaki-Einstein manifolds and volume minimization}, Comm. Math. Phys., {\bf 280} (2008), 611-673.

\bibitem{Minoo} M. Lovri\'c, M. Min-Oo and E. A. Ruh, \emph{Deforming transverse Riemannian metrics of foliations}. Asian J. Math., {\bf 4} (2000), no. 2, 303-314.

\bibitem{PSstab} D. Phong, J. Sturm, \emph{Lectures on stability and constant scalar curvature}, Current Developments in Mathemtaics, {\bf 2007}(2009), 101-176.

\bibitem{SmoWaZa} K. Smoczyk, G. Wang and Y. Zhang, \emph{The Sasaki-Ricci Flow}, Inter. J. of Math., {\bf21} (2010), no.7, 951-969.

\bibitem{Sparks} J. Sparks, \emph{Sasaki-Einstein manifolds}, preprint, arXiv: 1004.2461.

\bibitem{RugYe} R. Ye, \emph{The logarithmic Sobolev inequality along the Ricci flow}, preprint, arXiv:0707.2424.

\bibitem{QZhang} Q.S. Zhang, \emph{A uniform Sobolev Inequality under the Ricci flow}, Inter. Math. Res. Not. {\bf 2007}(2007), 17p.

\bibitem{QZhangEr} Q.S. Zhang, \emph{ Erratum to: A uniform Sobolev Inequality under the Ricci flow}, Inter. Math. Res. Not. {\bf 2007}(2007), 4p.

\bibitem{ZhangBk} Q.S. Zhang, \emph{Sobolev Inequalities, Heat Kernels under Ricci Flow and the Poincar\'e Conjecture}, CRC Press, 2011.

\bibitem{Yau} S.-T. Yau, \emph{Open problems in geometry}, Proc. Symp. Pure Math., {\bf 54} (1993), 1-18.

\end{thebibliography}
\end{document}